\newcommand{\Z}{\mathbb{Z}}
\newcommand{\GL}{\mathrm{GL}}
\newtheorem{lemma}{Lemma}[section]
\newaliascnt{teo}{lemma}
\newtheorem{teo}[teo]{Theorem}
\newaliascnt{prop}{lemma}
\newtheorem{prop}[prop]{Proposition}
\theoremstyle{remark}
\newaliascnt{rem}{lemma}
\newtheorem{rem}[rem]{Remark}
\newcommand{\addresseshere}{%
  \enddoc@text\let\enddoc@text\relax
}
\author{Michelle Chu}
\address{Department of Mathematics, University of Illinois at Chicago, 851 S. Morgan Street, Chicago, IL 60607-7045, USA}
\email{michu@uic.edu} 
\author{Alexander Kolpakov}
\address{Institut de math\'{e}matiques, Universit\'{e} de Neuch\^{a}tel, Rue Emile-Argand 11, CH-2000 Neuch\^{a}tel, Suisse / Switzerland}
\email{kolpakov.alexander@gmail.com}
\thanks{M.C. was supported by NSF grant DMS-1803094 and A.K. was supported by SNSF project no.~PP00P2-170560.}
\title[]{A hyperbolic counterpart to Rokhlin's cobordism theorem}
\begin{document}

\begin{abstract}
The purpose of the present paper is to prove existence of \emph{super-exponentially} many compact orientable hyperbolic arithmetic $n$-manifolds that are geometric boundaries of compact orientable hyperbolic $(n+1)$-manifolds, for any $n \geq 2$, thereby establishing that these classes of manifolds have the same growth rate with respect to volume as all compact orientable hyperbolic arithmetic $n$-manifolds. An analogous result holds for non-compact orientable hyperbolic arithmetic $n$-manifolds of finite volume that are geometric boundaries, for $n \geq 2$.
\end{abstract}

\maketitle

\epigraph{\textit{In homage to V. Rokhlin\\ on his 100th anniversary.}}

\medskip

\section{Introduction}\label{sec: Intro}
A classical result by V.~Rokhlin states that every compact orientable $3$-manifold bounds a compact orientable $4$-manifold, and thus the three-dimensional cobordism group is trivial. Rokhlin also proved that a compact orientable $4$-manifold bounds a compact orientable $5$-manifold if and only if its signature is zero, which is true for all closed orientable hyperbolic $4$-manifolds. One can recast the question of bounding in the setting of hyperbolic geometry, which generated plenty of research directions over the past decades. 

A hyperbolic manifold is a manifold endowed with a Riemannian metric of constant sectional curvature $-1$. Throughout the paper, hyperbolic ma\-ni\-folds are assumed to be connected, orientable, complete, and of finite volume, unless otherwise stated. We refer to \cite{MaRe, VS} for the definition of an arithmetic hyperbolic manifold.

A connected hyperbolic $n$-manifold $\mathcal{M}$ is said to \emph{bound geometrically} if it is isometric to $\partial \mathcal{W}$ for a hyperbolic $(n+1)$-manifold $\mathcal{W}$ with totally geodesic boundary.

Indeed, some interest in hyperbolic manifolds that bound geometrically was kindled by the works of Long, Reid \cite{LR1, LR2} and Niemershiem \cite{N}, motivated by a preceding work of Gromov \cite{G1, G2} and a question by Farrell and Zdravkovska \cite{FZ}. This question is also related to hyperbolic instantons, as described in \cite{RT1, RT2}. 

As \cite{LR1} shows many closed hyperbolic $3$-manifolds do not bound geometrically: a necessary condition is that the $\eta$-invariant of the $3$-manifold must be an integer. The first example of a closed hyperbolic $3$-manifold known to bound geometrically was constructed by Ratcliffe and Tschantz in \cite{RT1} and has volume of order $200$. 

The first examples of knot and link complements that bound geometrically were produced by Slavich in \cite{S1, S2}. However, \cite{KRR} implies that there are plenty of cusped hyperbolic $3$-manifolds that cannot bound geometrically, with the obstruction being the geometry of their cusps.

In \cite{LR3}, by using arithmetic techniques, Long and Reid built infinitely many orientable hyperbolic $n$-manifolds $\mathcal{N}$ that bound geometrically an $(n+1)$-manifold $\mathcal{M}$, in every dimension $n\geqslant 2$. Every such manifold $\mathcal{N}$ is obtained as a cover of some $n$-orbifold $O_\mathcal{N}$ geodesically immersed in a suitable $(n+1)$-orbifold $O_\mathcal{M}$. However, this construction gives no control on the volume of the manifolds. 

In \cite{BGLS}, Belolipetsky, Gelander, Lubotzky, and Shalev showed that the growth rate of all orientable arithmetic hyperbolic manifolds, up to isometry, with respect to volume is super-exponential, in all dimensions $n\geq 2$.  Their lower bound used a subgroup counting technique due to Lubotzky \cite{L95}. In the present paper we shall use the ideas of \cite{LR3} together with the subgroup counting argument due to Lubotzky \cite{L95} (also used in \cite{BGLS}), together with the more combinatorial colouring techniques from \cite{KS} in order to prove the following facts:

\begin{prop}\label{prop1}
Let $\kappa_n(x) = $ the number of non-isometric non-orientable compact arithmetic hyperbolic $n$-manifolds of volume $\leq x$. Then we have that $\kappa_n(x) \asymp x^x$ for any $n \geq 3$. 
\end{prop}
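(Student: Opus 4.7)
The plan is to establish matching super-exponential bounds $x^{c_1 x} \leq \kappa_n(x) \leq x^{c_2 x}$ by adapting the orientable counting argument of Belolipetsky--Gelander--Lubotzky--Shalev \cite{BGLS} to the non-orientable setting.

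For the upper bound, I would invoke the orientation double cover. Any non-orientable compact arithmetic hyperbolic $n$-manifold $\mathcal{N}$ of volume at most $x$ has an orientation double cover $\widetilde{\mathcal{N}}$ which is itself a compact orientable arithmetic hyperbolic $n$-manifold of volume at most $2x$. The fibre of the assignment $\mathcal{N} \mapsto \widetilde{\mathcal{N}}$ is bounded by the order of $\mathrm{Isom}(\widetilde{\mathcal{N}})$, which grows at most polynomially (in fact linearly) in $\mathrm{vol}(\widetilde{\mathcal{N}})$ for compact hyperbolic manifolds with $n \geq 3$. Combining with the $x^{cx}$ upper bound from \cite{BGLS} on orientable arithmetic hyperbolic $n$-manifolds of volume at most $2x$ then yields $\kappa_n(x) \leq x^{c_2 x}$.

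For the lower bound, I would fix an arithmetic lattice $\Gamma < \mathrm{Isom}(\mathbb{H}^n)$ strictly containing its index-$2$ orientation-preserving subgroup $\Gamma^+ := \Gamma \cap \mathrm{Isom}^+(\mathbb{H}^n)$. Such $\Gamma$ exists in every dimension $n \geq 3$, for instance as the $\mathcal{O}_k$-integer points of the full orthogonal group of a suitable quadratic form of signature $(n,1)$ over a totally real number field $k$; by Selberg's lemma it admits a torsion-free finite-index sublattice $\Lambda \leq \Gamma$ that still meets the orientation-reversing coset $\Gamma \setminus \Gamma^+$. Applying Lubotzky's subgroup counting technique \cite{L95} to $\Lambda$ should then produce at least $x^{c_1 x}$ subgroups $\Delta \leq \Lambda$ of index at most $x$, each automatically torsion-free. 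A subgroup $\Delta \leq \Lambda$ yields an orientable quotient precisely when $\Delta \subseteq \Lambda \cap \Gamma^+$; since $[\Lambda : \Lambda \cap \Gamma^+] = 2$, such $\Delta$ are in bijection with subgroups of $\Lambda \cap \Gamma^+$ of index at most $x/2$, of which there are at most $(x/2)^{c_2 x/2}$ by the \cite{BGLS} upper bound. This count is negligible compared to $x^{c_1 x}$, so almost every $\Delta$ produced by Lubotzky's method has a non-orientable manifold quotient. A further polynomial-in-$x$ correction for conjugate subgroups producing isometric manifolds then delivers the desired $\kappa_n(x) \geq x^{c x}$.

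The main obstacle I anticipate is verifying that Lubotzky's super-exponential lower bound genuinely applies inside the non-orientable lattice $\Lambda$ and not merely inside its orientation-preserving subgroup. The resolution should be clean: Lubotzky's construction in \cite{L95} counts subgroups via principal congruence quotients, which for arithmetic lattices of $\mathrm{SO}(n,1)$-type are finite groups of Lie type that inherit the subgroup-rich structure needed for the Pyber-style count, and extending the congruence setup from $\Gamma^+$ to $\Gamma$ only inserts a factor of $2$ that is absorbed into the exponential asymptotic. The colouring techniques of \cite{KS} mentioned in the introduction should provide convenient bookkeeping for simultaneously enforcing torsion-freeness and the non-orientability colour across the constructed subgroup tower, but I do not expect any conceptual difficulty beyond this verification.
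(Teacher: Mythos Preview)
Your upper bound via the orientation double cover is fine. The lower bound has a genuine gap in the subtraction step. You produce at least $x^{c_1 x}$ subgroups of $\Lambda$ of index at most $x$ and then discard those lying in $\Lambda^+ = \Lambda \cap \Gamma^+$ by invoking the BGLS upper bound $(x/2)^{c_2 x/2}$ on the latter. For the difference to remain of order $x^{cx}$ you need $c_1 > c_2/2$; but $c_1$ is a \emph{lower-bound} constant and $c_2$ an \emph{upper-bound} constant, obtained by unrelated arguments, and nothing in \cite{BGLS} or \cite{L95} supplies this inequality. (Your description of Lubotzky's mechanism is also off: the lower bound in \cite{L95, BGLS} comes from a virtual retraction onto a free group and the factorial subgroup growth of free groups, not from a Pyber-type count inside congruence quotients.)

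The paper sidesteps any subtraction. It builds an explicit retraction $R$ of an arithmetic reflection group onto a virtually free subgroup $\Delta$, passes to a torsion-free finite-index $\Gamma$, and exhibits a specific orientation-reversing element $\delta \in \Gamma \cap \ker R$. Every preimage $H = \rho^{-1}(K)$ of a finite-index subgroup $K$ in the free part of $\Delta$ then contains $\delta$, so \emph{all} of the $\asymp n^{n}$ subgroups so produced give non-orientable quotients by construction; no competing upper bound ever enters the lower-bound argument. Arranging $\ker R$ to meet the orientation-reversing coset is the actual content of the proof --- done via colourings of right-angled polytopes in dimensions $3$ and $4$, via reductions modulo pairs of primes (Lemmas~\ref{lem:TF-subgroup}--\ref{lem:No-torsion}) in dimensions $5$ and $6$, and inductively for all $n$ via the Bergeron--Haglund--Wise virtual retraction theorem \cite{BHW}. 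This placement of $\delta$ in the kernel is precisely the step you dismiss as bookkeeping; it is what replaces the unavailable inequality $c_1 > c_2/2$.
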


\begin{prop}\label{prop2}
Let $\nu_n(x) = $ the number of non-isometric non-orientable cusped arithmetic hyperbolic $n$-manifolds of volume $\leq x$. Then we have that $\nu_n(x) \asymp x^x$ for any $n \geq 3$.
\end{prop}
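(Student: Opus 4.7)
The plan is to adapt the argument for Proposition~\ref{prop1} to the cusped setting. The upper bound $\nu_n(x) \ll x^x$ is immediate from the subgroup counting in \cite{BGLS}, whose estimate applies to all arithmetic hyperbolic $n$-manifolds in a fixed commensurability class regardless of orientability, summed across commensurability classes of bounded covolume.

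For the lower bound, I would begin with a non-orientable, non-cocompact arithmetic seed orbifold $O_0 = \H^n / \Gamma_0$, for instance $\Gamma_0 = \Op(J_n, \Z)$ with $J_n = \mathrm{diag}(1, \ldots, 1, -1)$. This is a non-uniform arithmetic lattice of simplest type, and it manifestly contains orientation-reversing hyperbolic isometries (e.g.\ the reflection $x_1 \mapsto -x_1$), so $O_0$ is non-orientable. Lubotzky's subgroup growth theorem \cite{L95}, in the form used in \cite{BGLS}, then produces at least $m^{c\,m/\log m}$ finite-index subgroups of $\Gamma_0$ of index $\leq m$.

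The heart of the argument is to extract super-exponentially many subgroups $\Gamma' \leq \Gamma_0$ that are simultaneously (i) torsion-free, so that $\H^n / \Gamma'$ is a manifold, and (ii) not contained in $\Gamma_0^+ := \Gamma_0 \cap \mathrm{Isom}^+(\H^n)$, so that $\H^n / \Gamma'$ is non-orientable. For (i), one restricts to subgroups of a sufficiently deep principal congruence subgroup of $\Gamma_0$, where Selberg-type arguments guarantee torsion-freeness while the Lubotzky count is preserved. For (ii), one employs the combinatorial colouring construction from \cite{KS}: starting from a fixed torsion-free non-orientable cover $M \to O_0$, one produces a super-exponential family of finite covers of $M$ parametrised by colourings of a suitable collection of totally geodesic hypersurfaces, chosen so that the orientation character does not factor through the colouring map and each resulting cover remains non-orientable.

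The main obstacle is coordinating the subgroup counting with the colouring construction so that neither torsion-freeness nor non-orientability is lost, and so that the resulting manifolds are pairwise non-isometric. Non-isometry is handled via the standard observation that a single commensurability class of arithmetic hyperbolic $n$-manifolds contributes only polynomially many isometry types per volume bound, so that the super-exponential count of subgroups descends to a super-exponential count of non-isometric manifolds. Combined with the matching upper bound, this yields $\nu_n(x) \asymp x^x$.
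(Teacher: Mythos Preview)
Your proposal has genuine gaps at the two places where the argument must be made precise.

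First, the subgroup count you quote, $m^{cm/\log m}$, is only exponential: $m^{cm/\log m} = e^{cm}$. This cannot yield a lower bound of the form $x^{Bx}$. The mechanism actually used, both in \cite{BGLS} and in the paper, is not a direct appeal to congruence subgroup growth but a retraction of the lattice onto a (virtually) free group $F_d$, $d \geq 2$; one then uses that $F_d$ has at least $(n!)^{d-1}$ subgroups of index $\leq n$, which is genuinely of order $n^{cn}$.

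Second, and more seriously, your plan for forcing non-orientability is too vague to carry through. Colourings in \cite{KS} produce a \emph{single} torsion-free subgroup of a right-angled reflection group (the kernel of a map to $\Z_2^s$), not a super-exponential family, and ``colourings of hypersurfaces'' is not a construction that controls the orientation character uniformly across a family of covers. What the paper does is combine the two ingredients: it fixes a non-orientable proper colouring $\lambda$ of a finite-volume right-angled cusped polytope (the bipyramid $\mathcal{R}_3$, then inductively the Potyaga\u{\i}lo--Vinberg polytopes $\mathcal{R}_n$), so that $\Gamma = \ker \lambda$ is torsion-free, and simultaneously constructs a retraction $R$ of the ambient reflection group onto a virtually free subgroup $\Delta$ with the key property that a specific orientation-reversing element $\delta$ lies in $\ker R \cap \Gamma$. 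Pulling back index-$n$ subgroups $K \leq F_d < \Delta$ along $\rho = R|_\Gamma$ gives subgroups $H = \rho^{-1}(K)$ which \emph{all contain $\delta$}, hence all yield non-orientable manifolds. This is the step your outline lacks: a single fixed orientation-reversing element surviving in every subgroup of the family because it sits in the kernel of the retraction. Beyond the right-angled range the paper replaces the colouring by congruence reductions and the Long--Reid Subgroup Lemma, and ultimately by the virtual retraction theorem of \cite{BHW}, but the architecture---retraction onto a virtually free group with $\delta \in \ker R$---is unchanged.

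Finally, your non-isometry remark is backwards: a single commensurability class contains \emph{super-exponentially} many isometry types, not polynomially many---that is precisely the content of \cite{BGLS}. The correct reduction is that the number of subgroups of $\Gamma$ conjugate in $\mathrm{Isom}(\H^n)$ to a given $H$ is bounded by $|\mathrm{Isom}(\H^n/H)|$, which by Kazhdan--Margulis is at most a constant times $\mathrm{vol}(\H^n/H)$; dividing the $n^{cn}$ subgroup count by this linear factor still leaves $\asymp n^{cn}$ isometry classes.
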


Above, the notation ``$f(x) \asymp x^x$'' for a function $f(x)$ is a shorthand for ``there exist positive constants $A_1, B_1, A_2, B_2$, and $x_0$, such that $A_1 x^{B_1 x} \leq f(x) \leq A_2 x^{B_2 x}$, for all $x \geq x_0$.''

The techniques of \cite{BGLS, L95} provide us with super-exponentially many manifolds of volume $\leq x$ (for $x$ sufficiently large) by employing a retraction of the manifold's fundamental group into a free group. In our case we need however to take extra care in order to arrange for the kernel of such retraction comprise an orientation-reversing element.  Here Coxeter polytopes and reflection groups come into play as natural sources of orientation-reversing isometries, as well as building blocks for manifolds. 

Then, by using the embedding technique from \cite{KRS} and the techniques for constructing torsion-free subgroups from \cite{LR3} (see \autoref{lem:TF-subgroup}, also \autoref{lem:No-torsion} below), we obtain the following theorems establishing that the growth rate with respect to volume of arithmetic hyperbolic manifolds bounding geometrically is the same as that over all arithmetic hyperbolic manifolds. 

\begin{teo}\label{thm1}
Let $\beta_n(x) = $ the number of non-isometric orientable compact arithmetic hyperbolic $n$-manifolds of volume $\leq x$ that bound geometrically. Then we have that $\beta_n(x) \asymp x^x$ for $n \geq 3$.
\end{teo}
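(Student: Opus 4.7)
The upper bound $\beta_n(x) \leq A_2 \, x^{B_2 x}$ is immediate, since every manifold counted by $\beta_n(x)$ is in particular an orientable compact arithmetic hyperbolic $n$-manifold of volume at most $x$, and the total number of such manifolds is $\asymp x^x$ by \cite{BGLS}. So the plan is to establish the matching lower bound, and I would do this by upgrading the super-exponential family of non-orientable manifolds supplied by \autoref{prop1} to orientable manifolds that bound geometrically.

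To carry this out I would fix, following the Long--Reid construction \cite{LR3}, a compact arithmetic hyperbolic $(n+1)$-orbifold $\mathcal{O}$ together with a totally geodesic codimension-one sub-orbifold $\mathcal{O}'$ whose orbifold fundamental group is commensurable with the arithmetic lattice $\Gamma_0 < \mathrm{Isom}(\mathbb{H}^n)$ underlying \autoref{prop1}. The existence of such a pair $(\mathcal{O},\mathcal{O}')$ follows from the embedding technique of \cite{KRS} applied to a suitable hyperbolic Coxeter reflection orbifold whose fundamental polytope supports $\mathcal{O}'$ on one of its facets; in particular $\mathcal{O}$ inherits a retraction onto a non-abelian free group from Lubotzky's machinery \cite{L95}, as well as the combinatorial colouring from \cite{KS} that records the orientation characters. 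I would then run the subgroup-growth argument of \cite{BGLS, L95} inside $\pi_1^{\mathrm{orb}}(\mathcal{O})$: the super-exponentially many finite-index subgroups of the target free group lift to finite-index subgroups $\Lambda_i$ of $\pi_1^{\mathrm{orb}}(\mathcal{O})$, and the torsion-free-subgroup results \autoref{lem:TF-subgroup} and \autoref{lem:No-torsion}, combined with the colouring conditions from the proof of \autoref{prop1}, single out a positive proportion of the $\Lambda_i$ that are torsion-free, define orientable manifold covers $\mathcal{W}_i$ of $\mathcal{O}$, and whose restrictions to $\pi_1^{\mathrm{orb}}(\mathcal{O}')$ define the orientation double covers $\widetilde{\mathcal{N}}_i$ of the non-orientable manifolds $\mathcal{N}_i$ counted by \autoref{prop1}.

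By construction $\partial \mathcal{W}_i \cong \widetilde{\mathcal{N}}_i$, so each $\widetilde{\mathcal{N}}_i$ bounds geometrically; and since $\mathrm{vol}(\widetilde{\mathcal{N}}_i) = 2\,\mathrm{vol}(\mathcal{N}_i)$ while the number of distinct $\mathcal{N}_i$ sharing the same orientation double cover is controlled by the (polynomially bounded) isometry group of $\widetilde{\mathcal{N}}_i$, the super-exponential lower bound of \autoref{prop1} transfers verbatim to $\beta_n(x)$. The main obstacle I anticipate lies in this last packaging step: the colouring from \cite{KS} must be arranged to be simultaneously compatible with (i) the retraction onto the free-group quotient produced by \cite{L95}, (ii) the orientation homomorphism of $\mathcal{O}$, and (iii) the orientation homomorphism of $\mathcal{O}'$, and one must verify that imposing each of these three compatibility conditions costs only a constant factor in the Lubotzky count, so that the surviving family is still super-exponential in the volume. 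This simultaneous bookkeeping of torsion-freeness, ambient orientability, and boundary double-cover structure is where I expect the bulk of the technical work to concentrate.
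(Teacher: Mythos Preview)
Your upper bound via \cite{BGLS} and your final Kazhdan--Margulis estimate bounding the number of non-orientable quotients sharing a given orientation cover are both correct and match the paper. The gap is in between.

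The sentence ``by construction $\partial \mathcal{W}_i \cong \widetilde{\mathcal{N}}_i$'' is not justified. Your $\mathcal{W}_i$ are finite covers of a \emph{closed} orbifold $\mathcal{O}$, hence closed themselves; the preimage of $\mathcal{O}'$ sits inside $\mathcal{W}_i$ as an embedded totally geodesic hypersurface, not as a boundary. Some cutting step is needed, and you have not supplied it. Worse, in your setup the hypersurface you would cut along is the \emph{orientable} $\widetilde{\mathcal{N}}_i$ inside the orientable $\mathcal{W}_i$, hence two-sided; cutting then yields a boundary with two copies of $\widetilde{\mathcal{N}}_i$ (or disconnects the manifold), which does not exhibit $\widetilde{\mathcal{N}}_i$ as $\partial W$ for a connected $W$.

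The paper runs the argument in the opposite direction. The subgroup counting is done entirely in dimension $n$: one fixes a torsion-free $\Gamma < O^+(q_n,O_K)$ with a retraction $\rho$ onto a virtually free group and an orientation-reversing $\delta \in \ker\rho$, and pulls back finite-index subgroups of the free group. Every resulting $H = \rho^{-1}(K)$ automatically contains $\delta$, so every $M = \mathbb{H}^n/H$ is non-orientable --- there is no ``positive proportion'' loss and no three-way compatibility to negotiate. Then, for each such $M$ individually, \cite{KRS} provides an orientable compact arithmetic $(n{+}1)$-manifold $N$ containing $M$ as a totally geodesic hypersurface. Because $M$ is non-orientable inside the orientable $N$, it is one-sided; cutting $N$ along $M$ therefore gives a \emph{connected} manifold $N/\!/M$ whose boundary is exactly the orientation double cover $\widetilde{M}$. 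This one-sidedness, coming from the orientation mismatch, is the geometric mechanism that converts ``embedded'' into ``bounds'', and it is absent from your outline.
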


\begin{teo}\label{thm2}
Let $\gamma_n(x) =  $ the number of non-isometric orientable cusped arithmetic hyperbolic $n$-manifolds of volume $\leq x$ that bound geometrically. Then we have that $\gamma_n(x) \asymp x^x$ for $n \geq 3$.
\end{teo}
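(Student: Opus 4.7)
The upper bound $\gamma_n(x) \leq A_2\, x^{B_2 x}$ is automatic from the \cite{BGLS} upper bound on the growth of all arithmetic hyperbolic $n$-manifolds, since the bounding condition only cuts the total count. For the lower bound, the plan is to replay the strategy behind \autoref{thm1} in the non-cocompact regime: fix a non-cocompact arithmetic Coxeter polytope $P\subset\mathbb{H}^{n+1}$ whose reflection group $W$ contains a reflection $r$ in a facet spanning a hyperplane $H\cong\mathbb{H}^{n}$, such that the stabilizer $W_H$ of $H$ in $W$ is itself a non-cocompact arithmetic reflection group acting on $H$ with finite covolume. The existence of such a reflective embedding $\mathbb{H}^n\hookrightarrow\mathbb{H}^{n+1}$ in the cusped setting, for every $n\geq 3$, is guaranteed by the embedding theorem of \cite{KRS}.

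Next, I would run the subgroup-counting method of \cite{L95} already used for \autoref{prop2}: after passing to a finite-index subgroup if necessary, $W$ admits a retraction onto a non-abelian free group $F$, and the pullbacks of surjections $F\twoheadrightarrow Q$ onto finite groups yield at least $x^{c x}$ finite-index normal subgroups $\Gamma_\alpha\lhd W$ of index at most $x$. The crucial refinement, provided by the colouring techniques of \cite{KS} together with \autoref{lem:TF-subgroup} (and \autoref{lem:No-torsion}), is that each $\Gamma_\alpha$ can simultaneously be arranged to be torsion-free, to sit inside the orientation-preserving index-two subgroup $W^{+}$, and to be normalized by the distinguished reflection $r$. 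These three conditions cost only a bounded multiplicative correction in the count, so the super-exponential growth $x^{c x}$ is preserved. For each such $\Gamma_\alpha$ the quotient $M_\alpha=\mathbb{H}^{n+1}/\Gamma_\alpha$ is an orientable cusped arithmetic hyperbolic $(n+1)$-manifold on which $r$ descends to an orientation-reversing isometric involution whose fixed-point set is the totally geodesic hypersurface $N_\alpha = H/(\Gamma_\alpha\cap W_H)$. Cutting $M_\alpha$ along $N_\alpha$ produces a cusped hyperbolic $(n+1)$-manifold $\mathcal{W}_\alpha$ with totally geodesic boundary isometric to $N_\alpha$, so $N_\alpha$ bounds geometrically; by construction $N_\alpha$ is orientable, cusped, arithmetic, and its volume is comparable to $[W:\Gamma_\alpha]$.

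A standard finite-to-one argument, bounding the number of ways a given hyperbolic $n$-manifold can arise as a cover of the fixed arithmetic orbifold $\mathbb{H}^n/W_H$ by a subexponential function of its volume, converts the $x^{c x}$ distinct $\Gamma_\alpha$'s into at least $A_1\, x^{B_1 x}$ pairwise non-isometric boundary manifolds $N_\alpha$, yielding the required lower bound. The main obstacle, just as in \autoref{thm1}, is the simultaneous imposition of the three conditions on $\Gamma_\alpha$, and in particular the $r$-invariance: one has to adapt the colouring construction of \cite{KS} so that the colourings used to produce the surjection $W\twoheadrightarrow Q$ are symmetric with respect to the reflection $r$, without losing the combinatorial freedom responsible for the $x^{c x}$ count. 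Once this symmetry has been built into the colouring scheme, the cusped case runs parallel to the cocompact \autoref{thm1}, with the cusped embedding theorem of \cite{KRS} and the non-cocompact arithmetic Coxeter group $W$ replacing their cocompact counterparts throughout.
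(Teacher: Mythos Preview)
Your outline diverges from the paper's argument in a structural way, and the divergence introduces a genuine gap.

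In the paper the construction lives in dimension $n$, not $n+1$. One fixes a non-cocompact arithmetic group $\Gamma_n$ (coming from a colouring of a right-angled polytope, or from the reductions of \autoref{sec: reductions}) together with a retraction $R:\Gamma_n\to\Delta$ onto a virtually free group whose kernel already contains a specific orientation-reversing element $\delta$. Pulling back index-$k$ subgroups of the free group then yields $\asymp k^k$ finite-index subgroups $H$ of $\Gamma_n$, each of which still contains $\delta$; hence every $M=\mathbb{H}^n/H$ is \emph{non-orientable}. Only then is \cite{KRS} invoked, to embed each such non-orientable $M$ as a one-sided totally geodesic hypersurface in some orientable arithmetic $(n+1)$-manifold $N$; cutting $N$ along $M$ gives a manifold whose boundary is the orientation cover $\widetilde M$. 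The passage to general $n$ is an induction using \cite{BHW}: one lifts the retraction $R$ from $\Gamma_n$ to a finite-index subgroup $\Gamma_{n+1}<O^+(q_{n+1},O_K)$, keeping $\delta$ in the kernel. No Coxeter polytope in $\mathbb{H}^{n+1}$ is ever needed.

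Your scheme instead postulates, for every $n\ge 3$, a non-cocompact arithmetic Coxeter polytope $P\subset\mathbb{H}^{n+1}$ with a facet reflection $r$, and then tries to manufacture $r$-invariant torsion-free subgroups of $W=\mathrm{Ref}(P)$. The appeal to \cite{KRS} to guarantee such $P$ is a misreading: \cite{KRS} embeds arithmetic \emph{manifolds} of simplest type into one dimension higher, it says nothing about reflection groups or Coxeter polytopes. In fact, non-cocompact arithmetic hyperbolic reflection groups exist only in a bounded range of dimensions, which is exactly why the paper restricts the explicit polytope constructions to $n\le 13$ and handles the remaining dimensions via the \cite{BHW} virtual-retraction step. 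So for large $n$ your ambient object $W$ simply does not exist, and the argument collapses. Secondary issues---arranging $r$-normalisation of the pullback subgroups ``without losing combinatorial freedom'', identifying the fixed set of the induced involution with a single hypersurface $H/(\Gamma_\alpha\cap W_H)$, and the claimed comparability of $\mathrm{vol}(N_\alpha)$ with $[W:\Gamma_\alpha]$---are left as assertions, but they are moot once the existence of $W$ fails.
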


As a by-product, we provide a different proof to a part of the results in \cite{KR} and construct a few  new Coxeter polytopes not otherwise available on the literature. For dimensions $n = 2, \ldots, 6$ in the compact case, and dimensions $n = 2, \ldots, 13$ in the cusped case, we construct explicit examples of retractions onto free groups. More involved computations may be performed in dimensions $n = 14, 15$ (using the polytopes from \cite{Allcock}) and $n = 18, 19$ (using the polytopes from \cite{KapVin}). However, the general case follows from the main result of Bergeron, Haglund, Wise \cite{BHW} on virtually retractions of arithmetic groups of simplest type onto geometrically finite subgroups. 

It is also worth mentioning that a linear lower bound with respect to volume for the number of isometry classes of compact orientable bounding hyperbolic $3$-manifolds was obtained previously in \cite{MZ} by extending the techniques from \cite{KMT} and comparing the Betti numbers of the resulting manifolds.  

Given the present question's background, one may think of \autoref{thm1} as a ``hyperbolic counterpart'' to Rokhlin's theorem. Indeed, not every compact orientable arithmetic hyperbolic $3$-manifold bounds geometrically, but the number of those that do has the same growth rate as the number of all compact orientable arithmetic hyperbolic $3$-manifolds. In the light of Wang's theorem \cite{Wang} and the results of \cite{BGLM}, an analogous statement can be formulated for geometrically bounding hyperbolic $4$-manifolds without arithmeticity assumption. 

As for the closed hyperbolic surfaces that bound geometrically, it follows from the work of Brooks \cite{Brooks} that for each genus $g \geq 2$ the ones that bound form a dense subset of the Teichm\"uller space. Thus, there are infinitely many of them in each genus $g \geq 2$. However, there are only finitely many arithmetic ones, by \cite{BGLS}. The argument of \autoref{thm1} applies in this case, and we obtain

\begin{teo}\label{thm3}
Let $\alpha(g) = $ the number of non-isometric orientable closed arithmetic surfaces of genus $\leq g$ that bound geometrically. Then $(c g)^{\frac{g}{8}} \leq \alpha(g) \leq (d g)^{2 g}$, for some constants $0 < c \leq d$. 
\end{teo}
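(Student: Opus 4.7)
The plan is to transplant the argument of \autoref{thm1} to dimension two, using the Gauss--Bonnet identity $\mathrm{vol}(\Sigma) = 4\pi(g-1)$ as the dictionary between volume and genus.

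For the upper bound I would invoke the main result of \cite{BGLS}: the number of isometry classes of closed arithmetic hyperbolic surfaces of volume at most $x$ is bounded above by $A_2\, x^{B_2 x}$ for some positive constants and all sufficiently large $x$. Substituting $x = 4\pi(g-1)$ and absorbing the factor $4\pi$ into a new constant $d$ gives an upper bound of the shape $(dg)^{2g}$ for all sufficiently large $g$.

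For the lower bound I would follow the scheme of \autoref{thm1} in dimension two. Start from an initial closed arithmetic hyperbolic surface $\Sigma_0$ of genus $g_0$ that bounds geometrically a $3$-manifold; such a $\Sigma_0$ is provided by the $n=2$ case of the Coxeter-polytope based construction announced in the introduction, which also supplies a retraction $\rho \colon \pi_1(\Sigma_0) \twoheadrightarrow F$ onto a free group of some rank $k$, with $\ker \rho$ containing an orientation-reversing involution $\iota$ arising from a reflection of the ambient three-dimensional polytope. Lubotzky's subgroup counting argument applied to $F$ yields at least $d^{(k-1)d}$ subgroups of index at most $d$; pulling them back through $\rho$ produces as many finite-index subgroups $\Gamma < \pi_1(\Sigma_0)$, each still containing $\iota$, and the embedding technique of \cite{KRS} combined with \autoref{lem:TF-subgroup} and \autoref{lem:No-torsion} ensures that the corresponding cover $\widetilde\Sigma \to \Sigma_0$ bounds geometrically.

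The conversion from index to genus is by Riemann--Hurwitz: an unbranched cover of $\Sigma_0$ of degree $d$ has genus $1 + d(g_0 - 1) \leq g_0\, d$. Setting $d = \lfloor g/g_0 \rfloor$ and absorbing the polynomial-in-$d$ overcount coming from conjugate subgroups, one obtains
\[
\alpha(g) \;\geq\; (cg)^{g/8}
\]
for a suitable constant $c > 0$ and all large $g$, provided the initial data $(g_0, k)$ is arranged so that $(k-1)/g_0 \geq 1/8$. The main obstacle is precisely this quantitative matching: one must verify that the two-dimensional Coxeter-polytope construction yields a retraction whose free-group rank is sufficiently large relative to the genus of the initial bounding surface $\Sigma_0$ to beat the prescribed exponent $1/8$. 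Beyond that numerical check, the argument is a faithful transcription of the proof of \autoref{thm1} into dimension two.
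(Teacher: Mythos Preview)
Your proposal conflates two distinct objects and thereby loses the mechanism that actually produces bounding surfaces. You start with an \emph{orientable} surface $\Sigma_0$ and then ask for an orientation-reversing involution $\iota \in \ker\rho \subset \pi_1(\Sigma_0)$; but $\pi_1(\Sigma_0)$ is a torsion-free group acting by orientation-preserving isometries, so no such $\iota$ can exist (neither an involution nor orientation-reversing). In the paper the base group $\Gamma$ is instead the fundamental group of a \emph{non-orientable} surface, obtained as $\ker\lambda$ for a proper non-orientable colouring of the right-angled octagon $\mathcal{P}\subset\mathbb{H}^2$, and the orientation-reversing element $\delta = s_6 s_7 s_8 \in \Gamma$ is an infinite-order product of reflections. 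The retraction $R$ onto $\Gamma_4 \supset F_3$ is arranged so that $\delta \in \ker R$; pulling back index-$n$ subgroups of $F_3$ then yields many subgroups $H < \Gamma$ still containing $\delta$, hence many non-orientable surfaces $\mathbb{H}^2/H$.

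The bounding step is not ``a cover of a bounding surface bounds'': it is that each non-orientable $M = \mathbb{H}^2/H$ embeds as a one-sided totally geodesic hypersurface in an orientable arithmetic $3$-manifold $N$ via \cite{KRS}, and cutting $N$ along $M$ gives a connected manifold whose boundary is the orientation double cover $\widetilde M$. It is $\widetilde M$, not a cover of some pre-existing bounding $\Sigma_0$, that is shown to bound. Your appeal to \autoref{lem:TF-subgroup} and \autoref{lem:No-torsion} is also misplaced: those belong to the arithmetic-reduction method of \autoref{sec: reductions}, whereas the surface case is handled entirely by the colouring method of \autoref{sec: color} (there is no ``ambient three-dimensional polytope''; the polytope is the two-dimensional octagon). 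Finally, the exponent $g/8$ is not a free parameter to be matched: it comes from the explicit data $\mathrm{area}(\mathcal{P}) = 2\pi$, $[\Gamma_8 : \Gamma] = 8$, the factor $2$ from passing to the orientation cover, and the bound $\#\{K \leq F_3 : [F_3:K]\leq n\} \geq (n!)^2$, giving $\alpha(x) \geq (cx)^{x/(32\pi)}$ and then $x = 4\pi(g-1)$.
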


\begin{rem}\label{rem:surfaces-nc}
An analogous statement holds for finite-area non-compact surfaces if we substitute the genus $g$ with the area $x$. Namely, then $(c x)^{\frac{x}{32 \pi}} \leq \alpha(x) \leq (d x)^{\frac{x}{2 \pi}}$, for some $0 < c \leq d$.
\end{rem}

This adds many more (albeit not very explicit) examples to the ones obtained by Zimmermann in \cite{Z1, Z2}.

The manifolds that we construct in abundance in order to prove \autoref{thm1} -- \autoref{thm3} all happen to be orientation double covers. An easy observation implies that any closed orientable manifold $M$ that is an orientation cover bounds topologically: consider $W^\prime = M \times [0,1]$ and quotient one of its boundary components by an orientation-reversing fixed point free involution that $M$ necessarily has in this case. The resulting manifold $W$ is orientable  with boundary $\partial W \cong M$. Indeed, these are the manifolds that are \textit{not} orientation covers that may make the cobordism group non-trivial. 

Concerning geometrically bounding manifolds, we are not aware at the moment of any that does bound geometrically and that is not an orientation cover, in both compact and finite-volume cases.  

\section{Constructing geodesic boundaries by colourings}\label{sec: color}

\subsection{The right-angled dodecahedron} Let $\mathcal{D} \subset \mathbb{H}^3$ be a right-angled dodecahedron. By Andreev's theorem \cite{Andreev}, it is realisable as a regular compact hyperbolic polyhedron. Suppose that the faces of $\mathcal{D}$ are labelled with the numbers $1$, $\dots$, $12$ as shown in \autoref{fig:dodecahedron}. Let $s_i$ be the reflection in the supporting hyperplane of the $i$-th facet of $\mathcal{D}$, for $ i = 1, \dots, 12$, and let 
$\Gamma_{12} = \mathrm{Ref}(\mathcal{D}) = \langle s_1, s_2, \dots, s_{12} \rangle$ be the corresponding reflection group.

\begin{figure}[ht]
 \begin{center}
  \includegraphics[width = 6 cm]{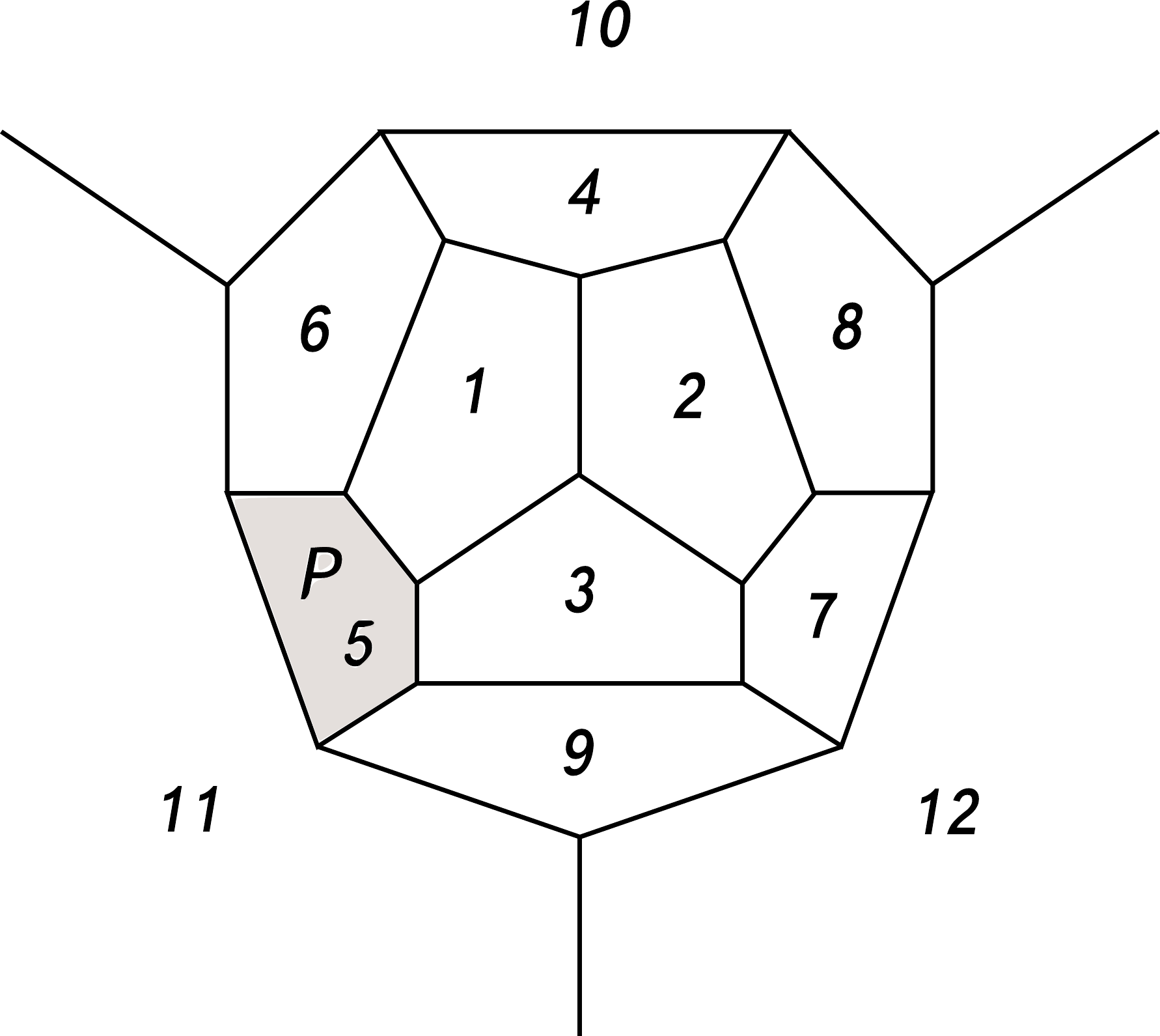}
 \end{center}
 \caption{A face labelling for the dodecahedron $\mathcal{D}$.}
 \label{fig:dodecahedron}
\end{figure}

Let $P$ be the pentagonal two-dimensional face of $\mathcal{D}$ labelled $5$ and let $\Gamma_4 =  \langle s_1, s_3, s_9, s_{11} \rangle$ be an infinite-index subgroup of $\Gamma_{12}$, which we may consider as a reflection group acting on the supporting hyperplane of $P$, which is isometric to $\mathbb{H}^2$. There is a retraction $R$ of $\Gamma_{12}$ onto $\Gamma_4$ given by
\begin{equation*}
R: s_i \mapsto  
\left\{
\begin{array}{cl}
s_i, &\mbox{ if } i \in \{1, 3, 9, 11\},\\
\mathrm{id}, &\mbox{ otherwise. }
\end{array} 
\right.
\end{equation*}

The group $\Gamma_4$ is virtually free: it contains $F_3 \cong \langle x, y, z \rangle$, a free group of rank $3$, as an index $8$ normal subgroup. Indeed, with $x = s_1 s_{11}$, $y = (s_1 s_9)^2$, $z = s_1 s_3 s_{11} s_3$, we have $F_3$ realised as a subgroup of $\Gamma_4$, which is the fundamental group of a $2$-sphere with four disjoint closed discs removed, as depicted in \autoref{fig:holed-sphere}. 

Let $P$ be a simple $n$-dimensional polytope (not necessarily hyperbolic) with $m$ facets labelled by distinct elements of $\Omega = \{1, 2, \dots, m\}$. A colouring of $P$, according to \cite{DJ, GS, I, Vesnin87, Vesnin}, is a map $\lambda: \Omega \rightarrow \mathbb{Z}^n_2$. A colouring is called proper if the colours of facets around each vertex of $P$ are linearly independent vectors of $V = \mathbb{Z}^n_2$. 

Proper colourings of compact right-angled polytopes $P\subset \mathbb{H}^n$ give rise to interesting families of hyperbolic manifolds \cite{GS, KMT, Vesnin87, Vesnin}. Such polytopes $P$ are necessarily simple.

\begin{figure}[hb]
\begin{center}
\includegraphics[width = 6 cm]{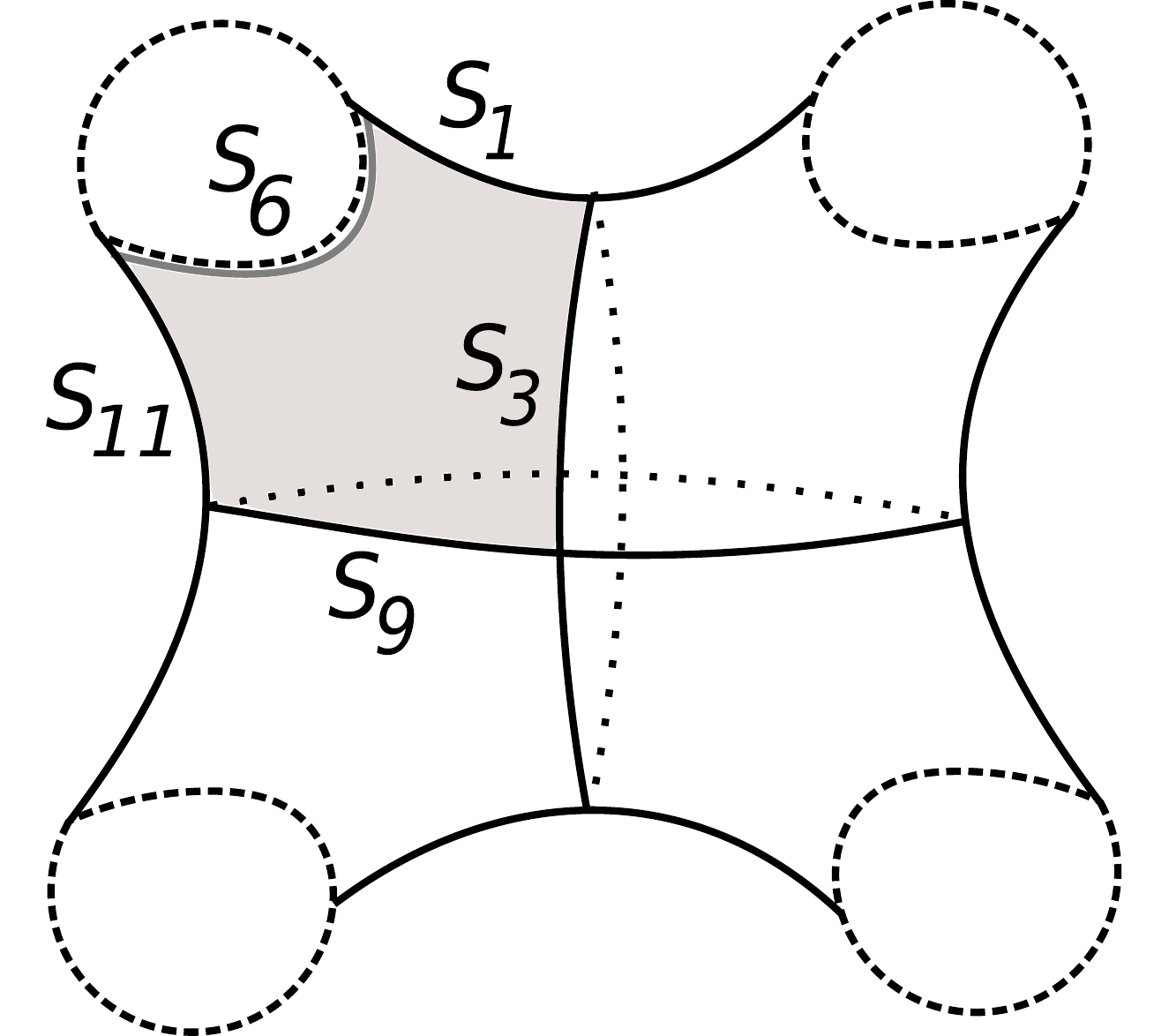}
\end{center}
\caption{The sphere $\mathbb{S}^2$ with four disjoint closed discs removed, and one of eight tiles associated to $\Gamma_4$ shaded. The reflection side of $s_6$ completes this tile to the pentagon $P$, and removing it is equivalent to cutting out a closed disc.}
 \label{fig:holed-sphere}
\end{figure}

In \cite{KMT} the notion of a colouring is extended to let $V = \mathbb{Z}^s_2$, $s \geq 2$, be a finite-dimensional vector space over $\mathbb{Z}_2$, and in \cite{KS} the notion of colouring is extended to polytopes that are not necessarily simple, but rather satisfy a milder constraint of being simple at edges. 

A polytope $P \subset \mathbb{H}^n$ is called simple at edges if each edge belongs to exactly $(n-1)$ facets. In the case of a finite-volume right-angled polytope $P \subset \mathbb{H}^n$,  $P$ is simple if $P$ is compact, and $P$ is simple at edges if it has any ideal vertices.

A colouring of a polytope $P\subset \mathbb{H}^n$ which is simple at edges is a map $\lambda: \Omega \rightarrow V$, where $V = \mathbb{Z}^s_2$, $s\geq n$, is a finite-dimensional vector space over $\mathbb{Z}_2$. A colouring $\lambda$ is proper if the following two conditions are satisfied:
\begin{enumerate}
\item \emph{Properness at vertices:} if $v$ is a simple vertex of $P$, then the $n$ colours of facets around it are linearly independent vectors of $V$;
\item \emph{Properness at edges:} if $e$ is an edge of $P$, then the $(n-1)$ colours of facets around $e$ are linearly independent. 
\end{enumerate}

Given a fixed labelling $\Omega$ of the facets of a finite-volume right-angled polytope $P \subset \mathbb{H}^n$, we shall write its colouring as a vector $\mathbf{\lambda} = (\lambda_1, \dots, \lambda_m)$, where $\lambda_i = \sum^{\dim V - 1}_{k = 0} \lambda(i)_k\cdot 2^k$ is a binary representation of the vector $\lambda(i) \in V$, for all $i \in \Omega$. 

Let $s_i$ be a reflection in the supporting hyperplane of the $i$-th facets of $P$. Then a proper colouring $\lambda: \Omega \rightarrow V$ defines a homomorphism from the reflection group $\Gamma = \mathrm{Ref}(P) = \langle s_1, s_2, \dots, s_m \rangle$ of $P$ to $V$, such that $\ker \lambda$ is a torsion-free subgroup of $\Gamma$ \cite{KS}.

Let us consider one of the colourings of $\mathcal{D}$ defined in \cite[Table 1]{GS}, that gives rise to a non-orientable manifold cover of the orbifold $\mathbb{H}^3 \diagup \Gamma_{12}$. Namely, choose $\mathbf{\lambda} = (1, 2, 4, 4, 2, 6, 3, 5, 5, 3, 1, 7)$, so that the $i$-th component of $\mathbf{\lambda}$ corresponds to the colour $\lambda_i$ of the $i$-th face of $\mathcal{D}$. As follows from \cite[Corollary 2.5]{KMT}, this colouring is indeed non-orientable, since $\lambda_1 + \lambda_2 + \lambda_7 = \mathbf{0}$ in $\mathbb{Z}^3_2$. Thus, $M = \mathbb{H}^3 \diagup \Gamma$, with $\Gamma  = \ker \mathbf{\lambda}$ a torsion-free subgroup of $\Gamma_{12}$, is a non-orientable compact hyperbolic $3$-manifold. 

The reflection group $\Gamma_{12}$ is an index $120$ subgroup in the reflection group $\mathrm{Ref}(T)$ of the orthoscheme $T = [4,3,5]$, which is arithmetic. Thus, $\Gamma_{12}$ is also arithmetic. Moreover, $\mathrm{Ref}(T) = O^+(q, \mathbb{Z}[\omega])$, with $\omega = \frac{1 + \sqrt{5}}{2}$,  for the quadratic form $q = -\omega x^2_0 + x^2_1 + x^2_2 + x^2_3$, as described in \cite[\S 7]{B} and, initially, in \cite{Bugaenko}.

Next, let $\rho: \Gamma \rightarrow R(\Gamma)$ be the restriction of $R$. Observe that $R(\Gamma) = \Gamma_4$, and thus $\rho: \Gamma \rightarrow \Gamma_4$ is an epimorphism. Here we use the fact that $s_1 = \rho(s_1 s_2 s_7)$, $s_3 = \rho(s_3 s_4)$, $s_9 = \rho(s_8 s_9)$, and $s_{11} = \rho(s_2 s_{10} s_{11})$, where all the respective products of $s_i$'s belong to $\Gamma  = \ker \mathbf{\lambda}$. 

For any subgroup $K \leq F_3$ of index $n$, let us consider $\rho^{-1}(K) = R^{-1}(K) \cap \Gamma$. Then $K$ has index $8n$ in $\Gamma_4$, and $H = \rho^{-1}(K)$ has index $8n$ in $\Gamma$.

Moreover, we produce an orientation-reversing element $\delta \in \Gamma$, such that $\delta \in H$ for every such $H$. 

Having established these facts, we know that there are $\asymp n^n$ non-conjugate in $\mathrm{Isom}(\mathbb{H}^3)$ subgroups of $\Gamma$ by using the argument of \cite[\S 5.2]{BGLS}, and thus there are $\asymp x^x$ non-isometric non-orientable compact arithmetic $3$-manifolds $M = \mathbb{H}^3 \diagup H$ of volume $\leq x$ (for $x > 0$ big enough). This proves the three-dimensional case of \autoref{prop1}.

Now, observe that $x = s_1 s_{11}$, and $\lambda(x) = (1,0,0)^t + (1,0,0)^t = \mathbf{0}$ in $\mathbb{Z}^3_2$.  Similarly, $\lambda(y) = \lambda(z) = \mathbf{0}$. Also, $R$ maps $x$, $y$, and $z$ respectively to themselves. Thus, $F_3 = \langle x, y, z \rangle \subset \rho(\Gamma)$. Finally, the element $\delta = s_2 s_4 s_6$ is such that $\lambda(\delta) = (0,1,0)^t + (1,0,0)^t + (1,1,0)^t = \mathbf{0}$, and $\rho(\delta) = \mathrm{id}$, so that $\delta \in \Gamma$ and $\delta \in H = \rho^{-1}(K)$, for every $K \leq  F_3$. 

Given that $H \leq O^+(q, \mathbb{Z}[\omega])$ for an admissible quadratic form $q$, we have that the argument in the proof of \cite[Corollary 1.5]{KRS} applies in this case, and thus the non-orientable compact manifold $M = \mathbb{H}^3\diagup H$ embeds into a compact orientable manifold $N = \mathbb{H}^4\diagup G$, for some arithmetic torsion-free $G \leq O^+(Q, \mathbb{Z}[\tau])$, with $Q = q + x^2_4$. Then, cutting $N$ along $M$ produces a manifold $N // M$, which is connected since $N$ is orientable while $M$ is not. Also, since $M$ is a one-sided submanifold of $N$, the boundary $\partial N'$ is isometric to $\widetilde{M}$, the orientation cover of $M$. Thus we obtain a collection of $\asymp n^n$ orientable arithmetic $3$-manifolds $\widetilde{M}$ that bound geometrically. However, some of them can be isometric, since the same manifold $\widetilde{M}$ can be the orientation cover of several distinct non-orientable manifolds $N_1$, $\dots$, $N_m$. 

In order to estimate $m$, observe that each $N_i$ is a quotient of $\widetilde{M}$ by a fixed point free orientation-reversing involution. Let the number of such involutions for $\widetilde{M}$ be $I(\widetilde{M})$. Then $m \leq I(\widetilde{M}) \leq | \mathrm{Isom}(\widetilde{M}) | \leq c_1 \cdot \mathrm{Vol}(\widetilde{M}) \leq c_2 \cdot n = c_3 x$. Indeed, the isometry group of $\widetilde{M}$ is finite, and by the Kazhdan-Margulis theorem \cite{KM} there exists a lower bound for the volume of the orbifold $\widetilde{M}\diagup \mathrm{Isom}(\widetilde{M}) \geq c_0 > 0$, from which the final estimate follows. Thus, we have at least $\asymp n^n/(c_2 n) \asymp n^n \asymp x^x$ non-isometric compact orientable arithmetic hyperbolic $3$-manifolds $\widetilde{M}$ of volume $\leq x$ that bound geometrically. The upper-bound of the same order of growth follows from \cite{BGLS}. This proves the three-dimensional case of \autoref{thm1}.

\subsection{The right-angled 120-cell} Let $\mathcal{C} \subset \mathbb{H}^4$ be the regular right-angled $120$-cell. This polytope can be obtained by the Wythoff construction with the orthoscheme $[4,3,3,5]$ that uses the vertex stabiliser subgroup $[3,3,5]$ of order $(120)^2 = 14400$. The polytope $\mathcal{C}$ is compact and each of its $3$-dimensional facets is a regular right-angled dodecahedron isometric to $\mathcal{D}$ defined above. 

Let us choose a facet $F$ of $\mathcal{C}$ and label it $120$. Since $F$ is isometric to $\mathcal{D}$, we can label the neighbouring facets of $F$ as follows:
\begin{itemize}
\item choose an isometry $\varphi$ between $F$ and $\mathcal{D}$ and transfer the labelling of $2$-dimensional faces of $\mathcal{D}$ depicted in \autoref{fig:dodecahedron} from $\mathcal{D}$ to $F$ via $\varphi$,
\item if $F'$, a facet of $\mathcal{C}$, shares a $2$-face labelled $i \in \{ 1, 2, \dots, 12\}$ with $F$, label $F'$ with $i$.
\end{itemize}
The remaining facets of $\mathcal{C}$ can be labelled with the numbers in $\{13, \dots, 119\}$ in an arbitrary way. Let $s_i$ denote the reflection on the supporting hyperplane of the $i$-th facet of $\mathcal{C}$, and let $\Gamma_{120} = \mathrm{Ref}(\mathcal{C}) = \langle s_1, s_2, \dots, s_{120} \rangle$.

Now define a colouring $\Lambda$ of $\mathcal{C}$ by using the colouring $\mathbf{\lambda}$ of $\mathcal{D}$ defined above. Namely, we set 
\begin{equation*}
\Lambda(s_i) = \left\{
\begin{array}{cl}
\lambda_i, &\mbox{ for } 1\leq i \leq 12,\\
2^{i-10}, &\mbox{ for } 13\leq i \leq 120.
\end{array}
\right.
\end{equation*}

Observe that $\Lambda$ is a proper colouring of $\mathcal{C}$, as defined in \cite{KMT}, and thus $\Gamma = \ker \Lambda$ is torsion-free. Also, $\Lambda$ is a non-orientable colouring. As in the case of $\mathcal{D}$, we use the retraction $R$ in order to map $\Gamma_{120}$ onto $\Gamma_4$, that contains $F_3$ as a finite-index subgroup. By taking preimages $H = \rho^{-1}(K)$ in $\Gamma$ of index $n$ subgroups $K\leq F_3$ and applying our argument from the previous section, we complete the proof of \autoref{prop1} in the $4$-dimensional case and obtain $\asymp n^n$ non-isometric non-orientable compact arithmetic hyperbolic $4$-manifolds $M = \mathbb{H}^4\diagup H$. The rest of the argument follows from \cite[Theorem 1.4]{KRS}. Thus, the $4$-dimensional case of \autoref{thm1} is also proven.

\subsection{Non-compact right-angled polytopes} Let $\mathcal{R}_3$ be a right-angled bi-pyramid depicted in \autoref{fig:bipyramid}, which is the first polytope in the series described by L. Potyaga\u{\i}lo and \`{E}. Vinberg in \cite{PV}. The construction in \cite{PV} produces a series of polytopes $\mathcal{R}_n \subset \mathbb{H}^n$, for $n=3, \dots, 8$, of finite volume, with both finite and ideal vertices, such that each facet of $\mathcal{R}_n$ is isometric to $\mathcal{R}_{n-1}$. Each $\mathcal{R}_n$ is produced by Whythoff's construction from the quotient of $\mathbb{H}^n$ by the reflective part of $O^+(f_n, \mathbb{Z})$, with $f_n = -x^2_0 + \sum^n_{k=1} x^2_k$, for $n=3, \dots, 8$. 

\begin{figure}
\begin{center}
\includegraphics[width = 6 cm]{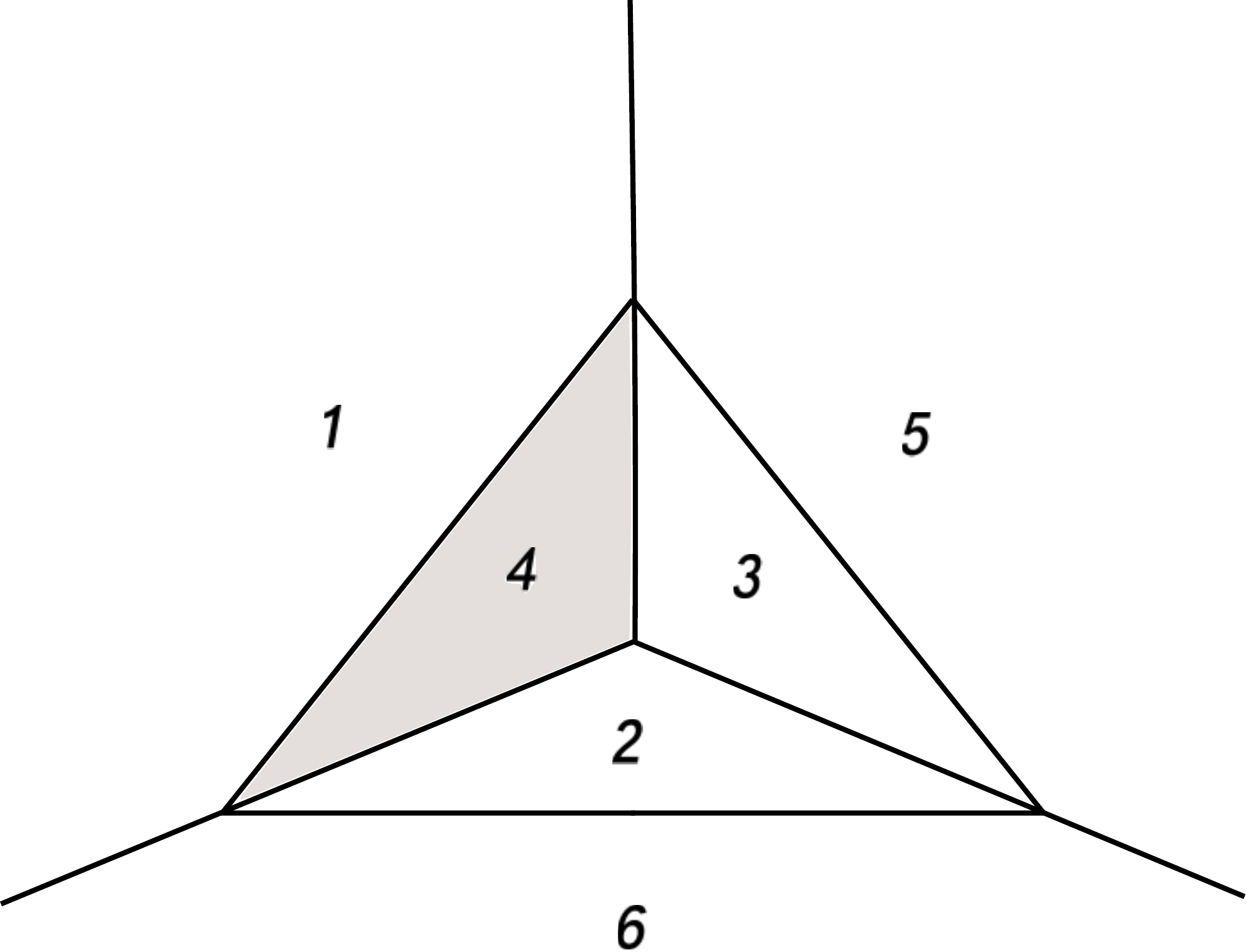}
\end{center}
\caption{A face labelling for the bi-pyramid $\mathcal{R}_3$. The compact vertices in $\mathbb{H}^3$ are the central one and the one at $\infty$. All other vertices are ideal and belong to $\partial \mathbb{H}^3$.}
\label{fig:bipyramid}
\end{figure}

If we provide a non-orientable proper colouring of $\mathcal{R}_3$, as defined in \cite{KS}, we can apply our previous reasoning in order to prove \autoref{prop2} and \autoref{thm2}, as consequence. Let us label the faces of $\mathcal{R}_3$ as shown in \autoref{fig:bipyramid}, and let the colouring be $\mathbf{\lambda} = (1, 1, 4, 7, 5, 2)$. It is easy to check that $\mathbf{\lambda}$ is indeed proper, since we need to check only the colours around the finite vertices and edges of $\mathcal{R}_3$. Also, $\mathbf{\lambda}$ is non-orientable, since $\lambda_4 + \lambda_5  + \lambda_6 = \mathbf{0}$ in $\mathbb{Z}^3_2$. Let $\Gamma = \ker \mathbf{\lambda}$.

Let $s_i$ be the reflection in the $i$-th facet of $\mathcal{R}_3$, and $\Gamma_6 = \langle s_1, \dots, s_6 \rangle$, and $\Delta = \langle s_1, s_2, s_3 \rangle$. Observe that $\Delta$ contains a free group of rank $2$ as a normal subgroup of index $4$. Indeed, $F_2 = \langle x, y \rangle$, with $x = s_1 s_2$, $y = s_3 s_1 s_2 s_3$ is such a subgroup. 

Let $R$ be a retraction $\Gamma_6 \rightarrow \Delta$ given by 
\begin{equation*}
R: s_i \mapsto  
\left\{
\begin{array}{cl}
s_i, &\mbox{ if } i \in \{1, 2, 3\},\\
\mathrm{id}, &\mbox{ otherwise. }
\end{array} 
\right.
\end{equation*}

Since $R$ maps $x$ and $y$ respectively to themselves, and $\lambda(s_1 s_2) = (1,0,0)^t + (1,0,0)^t = \mathbf{0}$ in $\mathbb{Z}^3_2$, we have that $F_3 \subset R(\Gamma)$. Moreover, for $\delta = s_4 s_5 s_6$ it holds that $\lambda(\delta) = \mathbf{0}$, as already verified above, and $R(\delta) = \mathrm{id}$. Then the argument from the previous case of the right-angled dodecahedron applies verbatim. 

For the induction step from $\mathcal{R}_{n-1}$ to $\mathcal{R}_n$ we just need to enhance the colouring in the way completely analogous to the extension of a non-orientable colouring of the dodecahedron $\mathcal{D}$ to a non-orientable colouring of the $120$-cell $\mathcal{C}$. Again, the rest of the argument proceeds verbatim in complete analogy to the previous cases.

\subsection{Surfaces that bound geometrically}
Let $\mathcal{P} \subset \mathbb{H}^2$ be a compact regular right-angled octagon, with sides labelled anti-clockwise $1$, $6$, $2$, $7$, $3$, $8$, $4$, $5$. Let $s_i$ be the reflection in the $i$-th side of $\mathcal{P}$, and $\Gamma_{8} = \mathrm{Ref}(\mathcal{P}) = \langle s_1, s_2, \dots, s_{8} \rangle$ be its reflection group. Also, let $\Gamma_4 = \langle s_1, s_2, s_3, s_4 \rangle$ and $F_3 = \langle x, y, z \rangle$, with $x = s_1 s_2$, $y = s_1 s_3$, $z = s_1 s_4$, be a free subgroup of $\Gamma_4$ of index $2$. The retraction of $\Gamma_8$ onto $\Gamma_4$ is given by
\begin{equation*}
R: s_i \mapsto  
\left\{
\begin{array}{cl}
s_i, &\mbox{ if } i \in \{1, 2, 3, 4\},\\
\mathrm{id}, &\mbox{ otherwise. }
\end{array} 
\right.
\end{equation*}

Let us choose a colouring $\mathbf{\lambda} = (1, 1, 1, 1, 2, 3, 5, 6)$ for $\mathcal{P}$, which is a proper and non-orientable one, since $\lambda_1 + \lambda_5 + \lambda_6 = \mathbf{0} \in \mathbb{Z}^3_2$. Let $\Gamma = \ker \lambda$. An easy check ensures that $F_3 \subset R(\Gamma)$, as well as that $R(\delta) = \mathrm{id}$ for an orientation-reversing element $\delta = s_6 s_7 s_8 \in \Gamma$. Then the lower bound $\alpha(x) \geq (c x)^{\frac{x}{32 \pi}}$, for some constant $c > 0$, for the number of geometrically bounding surfaces of area $\leq x$ (for $x$ large enough) follows immediately: the area of $\mathcal{P}$ equals $2 \pi$, $\Gamma$ has index $8$ in $\Gamma_8$, and the orientation cover of a non-orientable surface has twice its area. We also use the fact that the rank $d\geq 2$ free group $F_d$ has $\geq (n!)^{d-1}$ subgroups of index $\leq n$, for $n$ large enough. The upper bound  $\alpha(x) \leq (d x)^{\frac{x}{2\pi}}$, for some constant $d \geq c > 0$, follows from \cite{BGLS}. Since $\text{area} = 4 \pi (g-1)$, for an orientable genus $g\geq 2$ surface, this proves \autoref{thm3}. The case of non-compact finite-area surfaces mentioned in Remark~\ref{rem:surfaces-nc} proceeds by analogy. 

\section{Constructing geodesic boundaries by arithmetic reductions}\label{sec: reductions}
We start by recalling the following lemma of Long and Reid \cite[Lemma~2.2]{LR3} (c.f. also the remark after its proof).

\begin{lemma}[Subgroup Lemma]\label{lem:TF-subgroup}
Let $\Gamma < O^+(n, 1)$ be a subgroup of hyperbolic isometries defined over a number field $K$, and $\delta$ an element of $\Gamma$. Let $\theta_1, \theta_2: \Gamma \rightarrow F_i$ be two homomorphisms of $\Gamma$ onto a group $F_i$, with torsion-free kernels. Let $\Theta(g) = \left( \theta_1(g), \theta_2(g) \right): \Gamma \rightarrow F_1 \times F_2$. Suppose that $\theta_i(\delta)$ has order $k_i < \infty$, $i = 1, 2$, and any prime dividing $gcd(k_1, k_2)$ appears with distinct exponents in $k_1$ and $k_2$. Then $\Theta^{-1} \langle  (\theta_1(\delta), \theta_2(\delta)) \rangle$ is a torsion-free subgroup in $\Gamma$ of finite index that contains $\delta$.
\end{lemma}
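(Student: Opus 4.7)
The plan is to denote $H = \Theta^{-1}\langle (\theta_1(\delta),\theta_2(\delta))\rangle$ and verify the three assertions separately. Containment $\delta \in H$ is immediate since $\Theta(\delta) = (\theta_1(\delta),\theta_2(\delta))$ lies in the cyclic subgroup it generates. For finite index, I would use the fact (implicit in the intended applications in Section~\ref{sec: reductions}, where $\theta_i$ arise as arithmetic reductions modulo primes) that the target groups $F_1, F_2$ are finite; then $\Theta(\Gamma) \leq F_1\times F_2$ is finite, so the preimage of any subgroup has finite index in $\Gamma$.

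The content of the lemma lies in torsion-freeness. A preliminary observation I would record first is an \emph{order-preservation lemma}: if $\ker \theta_i$ is torsion-free and $\gamma \in \Gamma$ has finite order, then $\mathrm{ord}(\gamma) = \mathrm{ord}(\theta_i(\gamma))$. Indeed, letting $d$ denote the order of $\theta_i(\gamma)$, one has $\gamma^d \in \ker \theta_i$, and $\gamma^d$ is torsion, hence trivial; so the order of $\gamma$ divides $d$, and it clearly cannot be smaller.

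Now take any $\gamma \in H$ of finite order $N$. By definition $\Theta(\gamma) = (\theta_1(\delta)^m, \theta_2(\delta)^m)$ for some integer $m$, so $\theta_i(\gamma) = \theta_i(\delta)^m$ for $i=1,2$. Applying the preliminary lemma to both $\theta_1$ and $\theta_2$ yields
\begin{equation*}
N \;=\; \mathrm{ord}(\theta_1(\delta)^m) \;=\; \mathrm{ord}(\theta_2(\delta)^m),
\end{equation*}
i.e.\ the numerical identity $k_1/\gcd(k_1,m) = k_2/\gcd(k_2,m) = N$. The problem is thus reduced to a purely arithmetic one.

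The final step, which I expect to be the only real obstacle, is to deduce $N=1$ from this identity and the hypothesis on the $p$-adic valuations of $k_1$ and $k_2$. Suppose for contradiction that a prime $p$ divides $N$. Then $p$ divides both $k_1$ and $k_2$, hence $p \mid \gcd(k_1,k_2)$, so by hypothesis $v_p(k_1) \neq v_p(k_2)$; assume WLOG $v_p(k_1) < v_p(k_2)$. The identity above forces
\begin{equation*}
v_p(k_1) - \min(v_p(k_1),v_p(m)) \;=\; v_p(k_2) - \min(v_p(k_2),v_p(m)).
\end{equation*}
Splitting on the three cases $v_p(m) < v_p(k_1)$, $\; v_p(k_1) \leq v_p(m) < v_p(k_2)$, and $\; v_p(m) \geq v_p(k_2)$, one checks that the only way the two sides can agree is if they are both zero, contradicting $v_p(N) \geq 1$. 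Hence $N = 1$ and $\gamma$ is trivial, completing the proof.
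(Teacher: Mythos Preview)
The paper does not supply its own proof of this lemma; it is quoted from Long--Reid \cite[Lemma~2.2]{LR3} (together with the remark following its proof there), so there is nothing in the present paper to compare against. Your argument is correct and reconstructs what is essentially the Long--Reid reasoning: the order-preservation step (a torsion element of $\Gamma$ maps injectively under each $\theta_i$ because the kernels are torsion-free) reduces the question to the arithmetic identity $k_1/\gcd(k_1,m)=k_2/\gcd(k_2,m)$, and your $p$-adic case split cleanly shows that the hypothesis on exponents forces this common value to be $1$.

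Your caveat about the finite-index claim is also on point: the lemma as stated does not say the $F_i$ are finite, yet without that hypothesis the preimage of a finite cyclic subgroup need not have finite index. In every use of the lemma in \autoref{sec: reductions} the maps $\theta_i$ are reductions modulo an ideal and the targets are finite matrix groups, so the issue does not arise in practice; you are right to flag it as an implicit assumption.
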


The following lemma is used in order to show that the maps that we choose in the sequel as $\theta_i$, $i = 1, 2$, in the Subgroup Lemma above have torsion-free kernels. Its proof is very similar to that of \cite[Lemma~2.4]{LR3}. 

\begin{lemma}[No Torsion Lemma]\label{lem:No-torsion}
Let $\Gamma < O^+(n, 1)$ be a finite subgroup defined over the ring of integers $\mathcal{O}_K$ of a number field $K$, and let $p \in \mathcal{O}_K$ be an odd rational prime that does not divide the order of $\Gamma$. Then the reduction of $\Gamma$ modulo the ideal $\mathcal{J} = (p)$ is isomorphic to $\Gamma$.  
\end{lemma}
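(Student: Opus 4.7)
The plan is to prove that the reduction homomorphism $\pi \colon \Gamma \to \GL_{n+1}(\mathcal{O}_K / \mathcal{J})$ is injective. Since $\Gamma$ is finite, an injective homomorphism onto its image identifies $\Gamma$ with its reduction modulo $\mathcal{J}$, which is the conclusion sought. The argument is the classical Minkowski-style $p$-adic valuation computation, applied over $\mathcal{O}_K$ instead of $\mathbb{Z}$, in direct parallel with \cite[Lemma~2.4]{LR3}.

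Suppose $g \in \ker \pi$ and write $g = I + X$, where $X \in M_{n+1}(\mathcal{O}_K)$ has every entry divisible by $p$ in $\mathcal{O}_K$. Let $m$ be the order of $g$ in $\Gamma$; then $m$ divides $|\Gamma|$ and, by hypothesis, $\gcd(m, p) = 1$. Choose the maximal integer $k \geq 1$ such that every entry of $X$ lies in $(p^k) \subseteq \mathcal{O}_K$, and factor $X = p^k Y$ with $Y \in M_{n+1}(\mathcal{O}_K)$ having at least one entry outside $(p)$. Expanding the identity $g^m = I$ by the binomial theorem produces
\[
m\, p^k Y + \binom{m}{2} p^{2k} Y^2 + \cdots + p^{km} Y^m = 0.
\]
Dividing through by $p^k$ in $M_{n+1}(K)$ and then reducing modulo $p$, every term after the first becomes trivial, leaving $m Y \equiv 0 \pmod{p}$. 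Since $m$ is a unit modulo $p$, this forces $Y \equiv 0 \pmod{p}$, contradicting the maximality of $k$. Hence $g = I$, and $\pi$ is injective, realising $\Gamma$ as isomorphic to its reduction modulo $\mathcal{J}$.

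The one point requiring a little care is that $(p)$ is generally not a prime ideal in $\mathcal{O}_K$, so the valuation argument has to be phrased via divisibility by the rational prime $p$ in $\mathcal{O}_K$ rather than via a $\mathfrak{p}$-adic valuation attached to some prime above $(p)$. This is a cosmetic adjustment, since the argument is entirely componentwise on the entries of $X$ and uses only that $\mathcal{O}_K$ is an integral domain. Accordingly, I expect no substantive obstacle beyond arranging this bookkeeping clearly; the proof is otherwise a verbatim adaptation of Minkowski's classical reasoning.
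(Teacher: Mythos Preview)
Your proof is correct and follows essentially the same approach as the paper: both write a nontrivial kernel element as $g = I + p^r h$ with $r$ maximal, expand $g^m = I$ via the binomial theorem, and derive from $m h \equiv 0 \pmod{p}$ the contradiction $p \mid m \mid |\Gamma|$. If anything, your bookkeeping is tidier than the paper's --- you explicitly note that $m$ is a unit in $\mathcal{O}_K/(p)$ (via B\'ezout in $\mathbb{Z}$) and address the fact that $(p)$ need not be prime in $\mathcal{O}_K$, points the paper glosses over.
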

\begin{proof}
A non-trivial element $g$ of the kernel of the reduction map $\Gamma(\mathcal{O}_K) \rightarrow \Gamma(\mathcal{O}_K/\mathcal{J})$ can be written in the form $g = \mathrm{id} + p^r h$, where $h$ is a matrix not all of whose entries are divisible by $p^r$, with $r$ some positive integer. Let $q < \infty$ be the order of an element $g \in \Gamma$. Then we get
\begin{equation*}
\mathrm{id} = g^q = \mathrm{id} + q p^r h + \sum^q_{t=2} \binom{q}{t} p^{rt} h^{t}, 
\end{equation*}
and thus 
\begin{equation*}
q h = \mathbf{0} \mod p^r.
\end{equation*}
The latter implies $p^r$ divides $q$, since $h \neq \mathbf{0} \mod p^r$. Thus, $p$ divides $q$, and $q$ divides the order of $\Gamma$, since $q$ is the order of an element of $\Gamma$. The latter is a contradiction, and thus the reduction map has trivial kernel. 
\end{proof}

As shown by Vinberg in \cite{Vinberg}, in some cases for an admissible quadratic form $q$ of signature $(n, 1)$ defined over a totally real number field $K$ with ring of integers $O_K$ it holds that $O^+(q, O_K) = \mathrm{Ref}(P) \rtimes \mathrm{Sym}(P)$, where $P \subset \mathbb{H}^n$ is a finite-volume polytope. Here, $\mathrm{Ref}(P)$ denotes the associated reflection group, and  $\mathrm{Sym}(P)$ is the group of symmetries of $P$. Also, we assume that $O_K$ is a principal ideal domain in order to keep our account simpler. We refer the reader to \cite{Guglielmetti-1} for more details.

If the above presentation of $O^+(q, O_K)$ takes place for some finite-volume polytope $P \subset \mathbb{H}^n$, the form $q$ is called \textit{reflective}, and the polytope $P$ is called its \textit{associated polytope}.

An algorithm introduced by Vinberg in \cite{Vinberg} and implemented in \cite{Guglielmetti-2} by Guglielmetti allows us to find the associated polytope $P\subset\mathbb{H}^n$ in finite time, for any reflective admissible quadratic form of signature $(n, 1)$. 

\subsection{Compact polytopes in dimensions 5 and 6} \label{compact56}
Let $\omega=\frac{1+\sqrt{5}}{2}$ and let $P_n \subset \mathbb{H}^n$ be the polytopes associated to the quadratic forms 
\begin{equation*}
q_5=-(-1+2\omega)x_0^2+x_1^2+x_2^2+x_3^2+x_4^2+x_5^2,
\end{equation*}
\begin{equation*}
q_6=-2\omega x_0^2+x_1^2+x_2^2+x_3^2+x_4^2+x_5^2+x_6^2.
\end{equation*}
The polytopes $P_5$ and $P_6$ are apparently new, and were found by using AlVin and CoxIter software \cite{Guglielmetti-2, Guglielmetti-3}. They differ substantially from the polytopes that appear in \cite{Bugaenko, Bugaenko2}, and have fewer facets. 

Let $\Gamma_n = \mathrm{Ref}(P_n)$, be the reflection group of $P_n$, with generators $s_i$, $i \in I_n$,  where $I_n$ is the set of outer normals to the facets of $P_n$ or, equivalently, the set of nodes of the Coxeter diagram of $P_n$\footnote{The notation used is as follows:  a dashed edge means two reflection hyperplanes have a common perpendicular, a solid edge means parallel (at the ideal boundary) hyperplanes, a double edge means label $4$, a single edge means label $3$, any other edge has a label on it describing the corresponding dihedral angle. The colours are used for convenience only. }. With standard basis $\{v_0, v_1, \dots, v_n\}$, the Vinberg algorithm determines the outer normals for $n=5, 6$ which are given in the Appendix.

The associated reflection group $\Gamma_n$ is arithmetic, and contains a virtually free parabolic subgroup
\begin{equation*}
\Delta = \begin{cases} 
\langle s_5, s_6, s_9 \rangle, &\text{for } n = 5, \\
\langle s_6, s_9, s_{17}  \rangle, &\text{for } n = 6;
\end{cases}
\end{equation*}
Indeed, $\Delta$ is isomorphic to the $(2,\infty,\infty)$-triangle group\footnote{Here $\Delta$ is not actually generated by reflections in the sides of a hyperbolic finite-area triangle, but is rather only abstractly isomorphic to such a group. However, we are interested in its algebraic rather than geometric properties, regarding its subgroup growth.}, which contains $F_2$ as a subgroup of index $4$. 

The retraction $R: \Gamma_n \rightarrow \Delta$ is defined by sending all but three generators of $\Gamma_n$ to $\mathrm{id}$, with the only generators mapped identically being those of $\Delta < \Gamma_n$.

In order for $R$ being well-defined, we essentially need that the generators of $\Delta$ be connected to the rest of the diagram by edges with even labels only, since any two generators connected by a path of odd-labelled edges are conjugate. This folds, for instance, if the facets corresponding to the reflections generating $\Delta$ are \textit{redoubleable} in terms of \cite{Allcock}.

The element 
\begin{equation*}
\delta = \begin{cases}
s_1 s_2 s_3 s_4 s_7, & \text{for } n=5, \\
s_7 s_{13} s_{18}, & \text{for } n=6.
\end{cases}
\end{equation*}
is orientation-reversing, as it is a product of an odd number of reflections in $\mathbb{H}^n$. Moreover, $\delta \in \ker R$.

\begin{table}
\begin{tabular}{|c|c|c|}
\hline
Polytope  & Diagram & LCM \\ \hline
$P_5$  & \begin{tabular}[c]{@{}c@{}}\includegraphics*[scale=0.20]{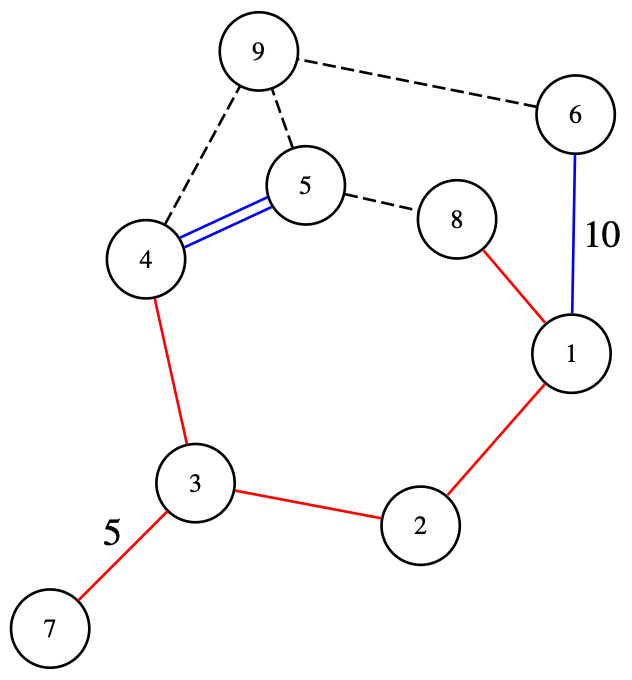}\end{tabular} & $57600 = 2^8 \cdot 3^2 \cdot 5^2$ \\ \hline
$P_6$  & \begin{tabular}[c]{@{}c@{}}\includegraphics*[scale=0.13]{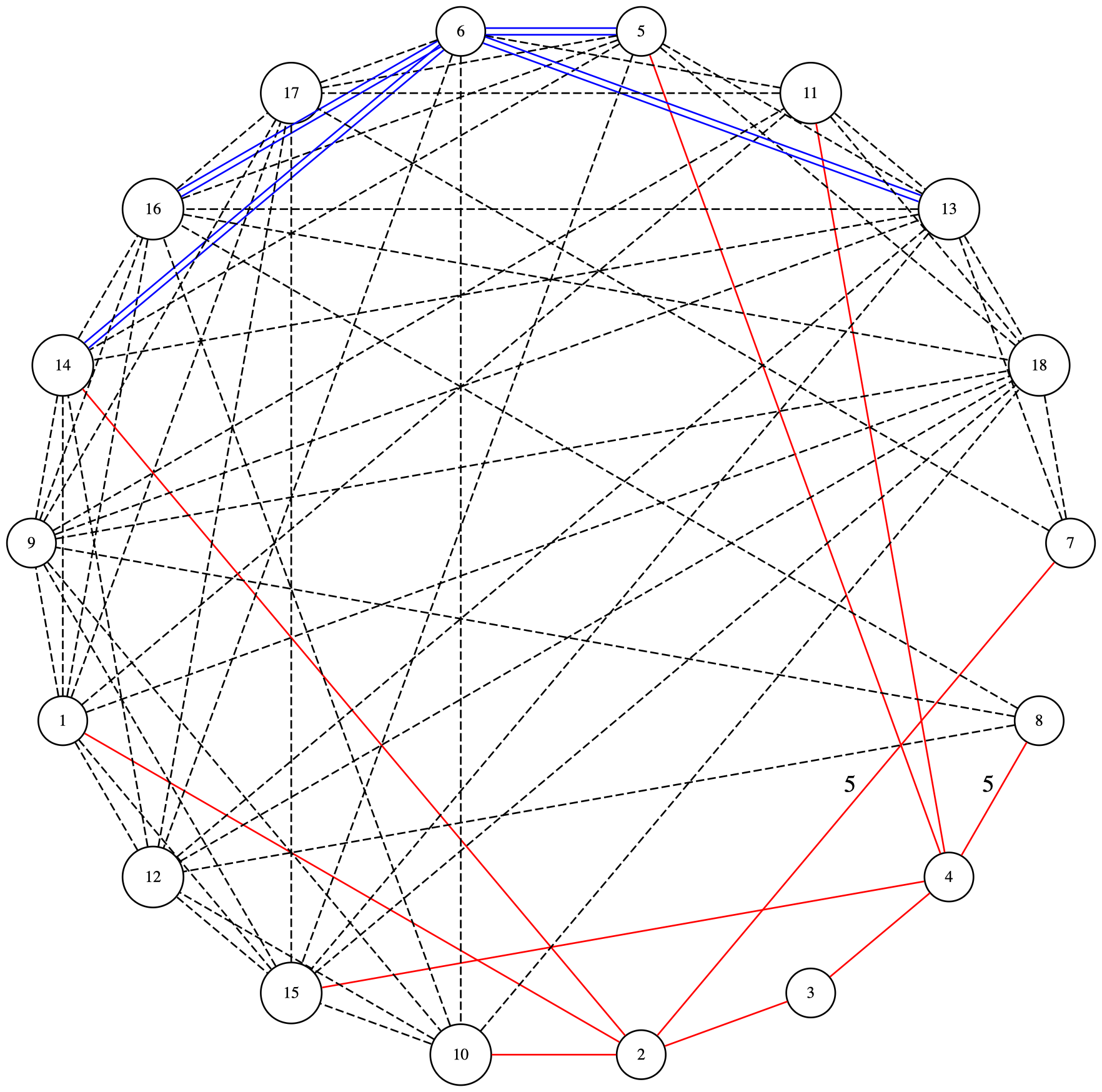}\end{tabular} & $230400 = 2^{10}\cdot 3^2\cdot 5^2$ \\ 
\hline
\end{tabular}
\caption{Polytopes $P_n$, $n=5, 6$, their Coxeter diagrams, and the least common multiple (LCM) of the orders of their parabolic finite subgroups}
\label{tab2}
\end{table}

We shall set the map $\Theta$ from \autoref{lem:TF-subgroup} to be a pair of reductions modulo various rational primes, and then use \autoref{lem:No-torsion} in order to ensure that their kernels are torsion-free. Indeed, we need to choose such an odd prime $p \in \mathbb{Z}$ that it does not divide the order of any finite parabolic subgroup in the diagram of $P_n$, $n = 5, 6$. The least common multiples of orders of finite parabolic subgroups for $P_n$, $n = 5, 6$, are given in~\autoref{tab2}\footnote{The orders of all finite parabolic subgroups associated with $P_n$ can be obtained by using \texttt{CoxIter} \cite{Guglielmetti-2} with the \texttt{-debug} option.}. 

For $\Gamma < \GL(n+1,\mathcal{O}_K)$, where $\mathcal{O}_K$ is the ring of integers of a number field $K$, let $\phi_p$ denote the homomorphism $\Gamma\rightarrow \GL(n+1,\mathcal{O}_K/\mathcal{J})$ induced by reduction modulo $\mathcal{J} = (p)$, the principal ideal generated by a rational integer $p$. 

Let us consider the reductions $\phi_{7}$ and $\phi_{11}$ as defined above, and let $\Theta = (\phi_{7}, \phi_{11})$. For $n=5$, the order of $\phi_{7}(\delta)$ equals $800 = 2^5 \cdot 5^2$, while the order of $\phi_{11}(\delta)$ equals $8052 = 2^2\cdot 3^1\cdot 11^1\cdot 61^1$, as follows by straightforward computations, c.f. \cite{CK-code1, CK-code2}.  For $n=6$, the order of $\phi_{7}(\delta)$ equals $8 = 2^3$, while the order of $\phi_{11}(\delta)$ equals $44 = 2^2\cdot 11^1$.

Then \autoref{lem:TF-subgroup} and \autoref{lem:No-torsion} apply, and $\Gamma = \Theta^{-1}\langle (\phi_{7}(\delta), \phi_{11}(\delta)) \rangle$ is a torsion-free subgroup of finite index in $\Gamma_n$ that contains the orientation-reversing element $\delta$ and retracts onto the free group $\Gamma\cap\Delta$. Then the argument analogous to that of Section~\ref{sec: color} applies. 

\subsection{Right-angled cusped polytopes in dimensions 4 to 8}\label{cusped45678}
Let $P_n \subset \mathbb{H}^n$ be the right-angled polytopes associated to the principal congruence subgroups of level $2$ for the quadratic forms 
\begin{equation*}
f_n = -x_0^2 + x_1^2 + x_2^2 + \cdots + x_n^2, \text{ for } n=4, \dots, 8.
\end{equation*}

Let $\Gamma_n = \mathrm{Ref}(P_n)$, be the associated reflection group, with generators $s_i$, $i \in I_n$,  where $I_n$ is the set of outer normals to the facets of $P_n$. With standard basis $\{v_0, v_1, \dots, v_n\}$, the Vinberg algorithm starts with the first $n$ outer normals being
\begin{equation*}
e_i = - v_i, \text{ for } 1 \leq i \leq n,
\end{equation*}
and continues with the next $\binom{n}{2}$ outer normals 
\begin{equation*}
e_{i, j} = v_0 + v_i + v_j, \text{ for } 1 \leq i < j \leq n,
\end{equation*}
all of them being $1$-roots, as is necessary for determining the reflective part of the principle congruence level $2$ subgroup, rather than that of the whole group of units for $f_n$. Let us set $e_{n+1} = e_{1, 2} = v_0 + v_1 + v_2$ and $e_{n+2} = e_{3, 4} = v_0 + v_3 + v_4$, as a more convenient notation.

Such $\Gamma_n$ is arithmetic, and it contains a virtually free parabolic subgroup $\Delta = \langle s_3, s_{4}, s_{n+2} \rangle$. Indeed, $\Delta$ is isomorphic to the $(2, \infty, \infty)$-triangle group which contains $F_2$ as a subgroup of index $4$. Consider the retraction
\begin{equation*}
R: s_i \mapsto  
\left\{
\begin{array}{cl}
s_i, &\mbox{ if } i \in \{3, 4, n+2\},\\
\mathrm{id}, &\mbox{ otherwise. }
\end{array} 
\right.
\end{equation*}

The element $\delta= s_1 s_2 s_{n+1}$ is orientation-reversing, as it is a product of $3$ reflections in $\mathbb{H}^n$. Moreover, $\delta \in \ker R$.

For $\Gamma\in \GL(n+1,\Z)$, let $\phi_m$ denote the homomorphism $\Gamma\rightarrow \GL(n+1,\Z/m\Z)$ induced by reduction modulo a positive integer $m$.  By \cite[Theorem~IX.7]{Newman} we know that the kernel of $\phi_m$ is torsion-free for $m > 2$. The reduction of $\delta$ modulo $3$ has order $4=2^2$, while its reductions modulo $4$ has order $2$, c.f. \cite{CK-code1, CK-code2}. Letting $\Theta = (\phi_3, \phi_4)$, \autoref{lem:TF-subgroup} applies, and $\Gamma = \Theta^{-1}\langle(\phi_3(\delta), \phi_4(\delta))\rangle$ is a torsion-free subgroup of finite index in $\Gamma_n$ that contains the orientation-reversing element $\delta$ and retracts onto the free group $\Gamma\cap\Delta$. 

\subsection{Cusped polytopes in dimensions 9 to 13}\label{cusped9to13}
Let $P_n \subset \mathbb{H}^n$ be the polytopes from Table~7 in \cite{Vinberg} associated to the quadratic forms 
\begin{equation*}
f_n = -2 x_0^2 + x_1^2 + x_2^2 + \cdots + x_n^2 \text{ for } n=9, 10 \text{ and } 13,
\end{equation*}
while $P_n$, for $n=11, 12$, be the polytopes with Coxeter diagrams given in Figure~\ref{fig:polytopes-cusped-11-12}, respectively. The latter ones appear to be new, and were found by using \texttt{AlVin} \cite{Guglielmetti-3}.

Let $\Gamma_n = \mathrm{Ref}(P_n)$, be the reflection group of $P_n$, with generators $s_i$, $i \in I_n$, where $I_n$ is the set of nodes in the Coxeter diagram of $P_n$. Such $\Gamma_n$ is arithmetic, and it contains a virtually free parabolic subgroup $\Delta$ indicated in Table~\ref{tab0}.

\begin{figure}[ht]
\hfill
\subfigure[$f_{11} = -2 x^2_0 + x^2_1 + \ldots + x^2_8 + 2 x^2_9 + 2 x^2_{10} + 2 x^2_{11}$]{\includegraphics[width=.47\textwidth]{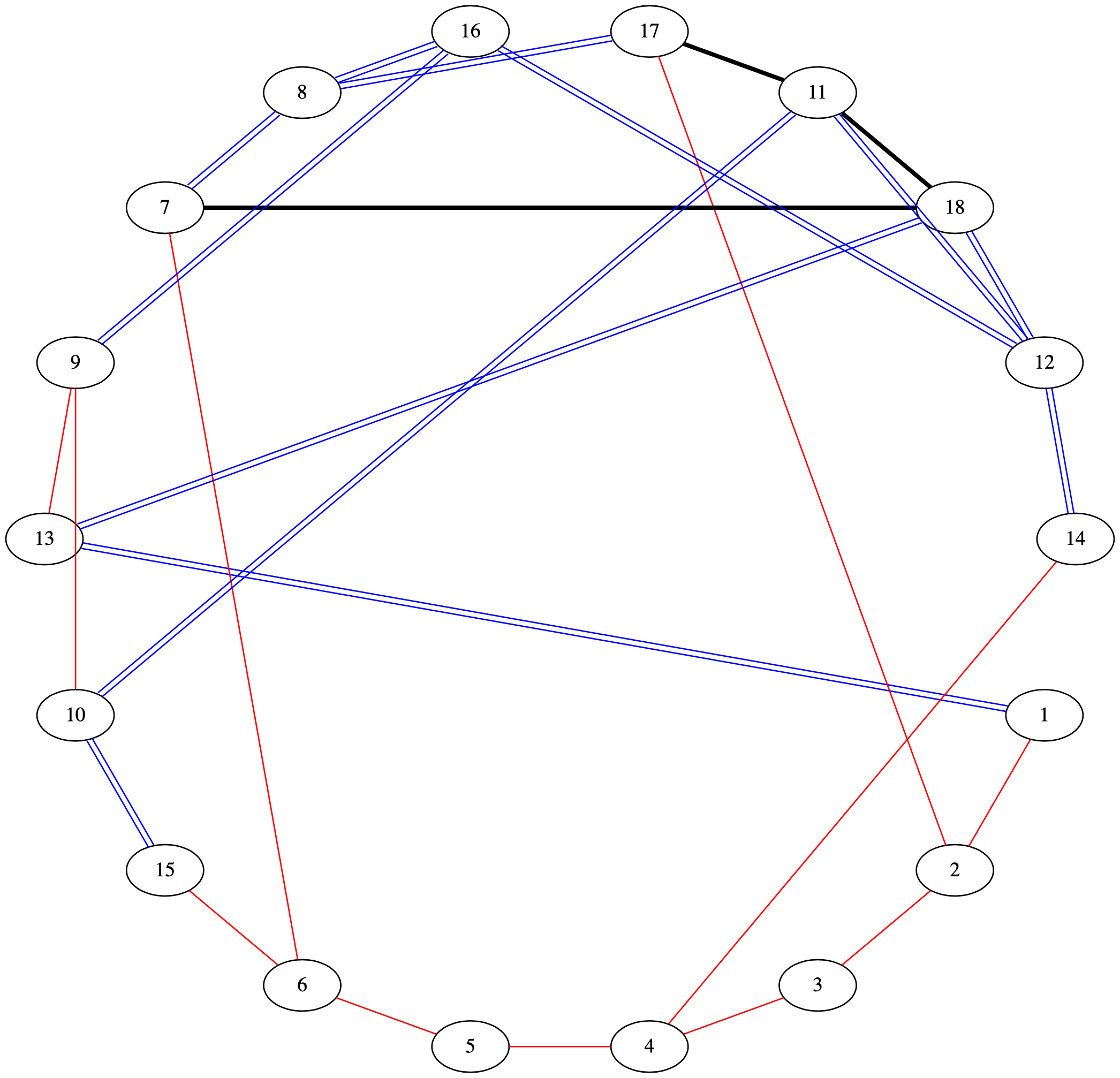}}
\hfill
\subfigure[$f_{12} = -2 x^2_0 + x^2_1 + \ldots + x^2_{11} + 2 x^2_{12}$]{\includegraphics[width=.47\textwidth]{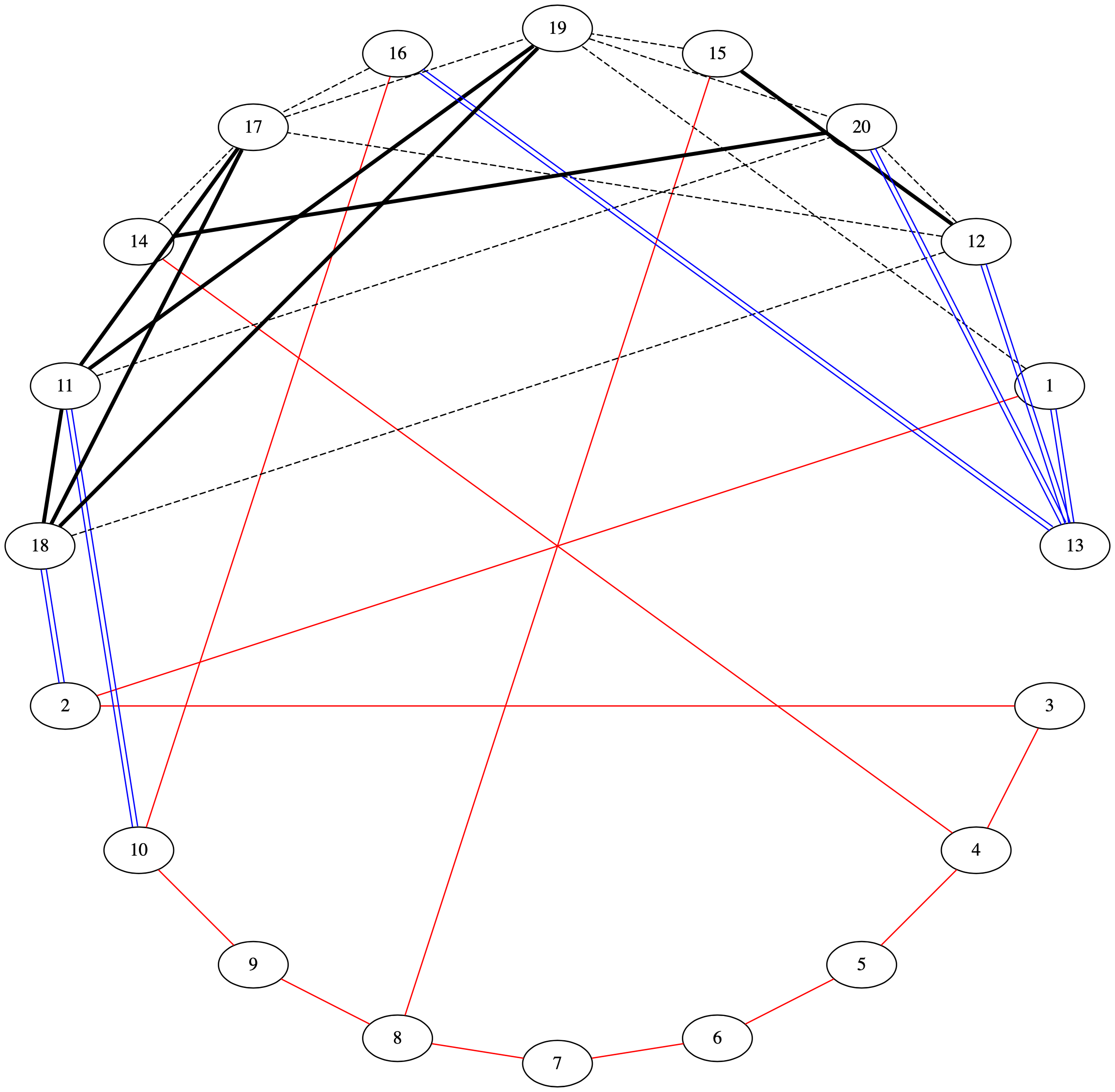}}
\hfill
\caption{The Coxeter diagrams for (a) $P_{11}$ and (b)  $P_{12}$, together with their associated quadratic forms}\label{fig:polytopes-cusped-11-12}
\end{figure}

Since $\Delta$ is generated by reflections in redoubleable facets we can define a retraction $R: \Gamma_n \rightarrow \Delta$, as before, that send all the generators of $\Gamma_n$ to $\mathrm{id}$, except of those of $\Delta$. 

\begin{table}[ht]
\begin{tabular}{|c|c|c|}
\hline
$n$  & generators of $\Delta$ & triangle group $\cong \Delta$ \\ \hline
$9$   &    $s_9$, $s_{10}$, $s_{12}$ & $(2, \infty, \infty)$ \\
$10$ & $s_{10}$, $s_{11}$, $s_{13}$ & $(2, 4, \infty)$ \\
$11$  & $s_{11}$, $s_{12}$, $s_{18}$ & $(4, 4, \infty)$ \\
$12$ & $s_{12}$, $s_{13}$, $s_{20}$ & $(4, 4, \infty)$ \\
$13$ & $s_{13}$, $s_{14}$, $s_{19}$ & $(2, \infty, \infty)$ \\
\hline
\end{tabular}
\caption{A virtually free parabolic subgroup $\Delta < \Gamma_n$, for $n = 9, \ldots, 13$}
\label{tab0}
\end{table}

The orientation-reversing element $\delta_n \in \ker R$ is defined by
\begin{equation*}
\delta_n = \begin{cases}
s_1 s_2 s_3 \ldots s_8 s_{n+2}, & \text{for } n = 9, 10, 13, \\
s_7 s_8 s_{16}, & \text{for } n = 11, \\
s_2 s_{11} s_{18}, & \text{for } n = 12.
\end{cases}
\end{equation*}

Letting $\Theta = (\phi_{m_1}, \phi_{m_2})$, \autoref{lem:TF-subgroup} applies with $m_1$ and $m_2$ as in \autoref{tab1}. Here, we also notice that $\delta_n$ for $n= 10, 13$ is an extension of $\delta_9$ by the identity map, which simplifies the computations. 

Then $\Gamma = \Theta^{-1}\langle(\phi_{m_1}(\delta), \phi_{m_2}(\delta))\rangle$ is a torsion-free subgroup of finite index in $\Gamma_n$ that contains the orientation-reversing element $\delta$ and retracts onto the free group $\Gamma\cap\Delta$. 

\medskip

\begin{table}[h]
\begin{tabular}{|c|c|c|c|c|}
\hline
$n$  & $m_1$ & $k_1 = \mbox{order of } \phi_{m_1}(\delta_n)$ & $m_2$ & $k_2 = \mbox{order of } \phi_{m_2}(\delta_n)$ \\ \hline
9, 10, 13  & $3$ & $84 = 2^2\cdot 3^1\cdot 7^1$  & $4$ & $34 = 2^1\cdot 17^1$ \\
11, 12 & $3$ &  $6 = 2^1 \cdot 3^1$ & $4$ & $4 = 2^2$ \\
\hline
\end{tabular}
\caption{Orders of the reductions of $\delta$ and their prime factorisations, c.f. \cite{CK-code1, CK-code2}}
\label{tab1}
\end{table}

\section{Constructing geometric boundaries from virtual retracts}

Let $q_n$ be an admissible quadratic form of signature $(n, 1)$ defined over a totally real number field $K$ with ring of integers $O_K$, and let $q_{n+1} = q_n + x^2_{n+1}$. Suppose also that $\Gamma_n < O^+(q_n, O_K)$ is a torsion-free subgroup, either of finite co-volume or co-compact.

Now assume that there exists a retraction $R_n: \Gamma_n \rightarrow \Delta$ of $\Gamma_n$ onto a virtually free subgroup $\Delta$ such that $\ker R_n$ contains an orientation-reversing element $\delta$.

By \cite[Proposition 7.1]{KRS}, there exists a torsion-free finite-index subgroup $\Gamma'_{n+1} \leq O^+(q_{n+1}, O_K)$, such that $\Gamma_n < \Gamma'_{n+1}$. Moreover, we may assume that  $\mathbb{H}^n / \Gamma_n$ is a properly embedded totally geodesic submanifold of $\mathbb{H}^{n+1} / \Gamma_{n+1}$. Thus, the group $\Gamma_n$ is a geometrically finite subgroup of $\Gamma'_{n+1}$.
By \cite[Theorem 1.4]{BHW}, there is a finite index subgroup $G<\Gamma'_{n+1}$, such that $G$ virtually retracts to its geometrically finite subgroups. In particular, $G$ virtually retracts to $G\cap\Gamma_n$. However, since $\Gamma'_{n+1}$ is linear, the arguments of \cite[Theorem 2.10]{LR4} apply to give a virtual retraction from $\Gamma'_{n+1}$ onto $\Gamma_n$. 
Let $\Gamma_{n+1}$ be the finite index subgroup of $\Gamma'_{n+1}$ which retracts onto $\Gamma_n$. Then the composition $R_{n+1}: \Gamma_{n+1} \rightarrow \Gamma_n \rightarrow \Delta$ is a retraction of $\Gamma_{n+1}$ onto $\Delta$, such that $\delta \in \ker R_{n+1}$.

All the previous arguments from Section \ref{sec: reductions} apply, and we obtain Theorems~\ref{thm2} -- \ref{thm3} for all $n \geq 2$, since we can use any of our examples worked out in Sections~\ref{sec: color}--\ref{sec: reductions} as a basis for the above inductive procedure. 

\section*{Acknowledgements}
\noindent
{{\small 
The first author thanks Darren Long for many helpful discussions. Both authors would like to thank Mikhail Belolipetsky (IMPA), Vincent Emery (Universit\"{a}t Bern), and Ruth Kellerhals (Universit\'e de Fribourg), as well as the Mathematisches Forschungsinstitut Oberwolfach (MFO) administration, for organising the mini-workshop ``Reflection Groups in Negative Curvature'' (1915b) in April 2019, during which the results of this paper were presented and discussed.
They also thank the organisers of the Borel Seminar (Les Diablerets, Switzerland) in December 2018, where this work was started, for stimulating research atmosphere, and the Swiss Doctoral Program -- CUSO, Swiss Mathematical Society, and ETH Z\"{u}rich for the financial support of the event. 
A word of special gratitude is addressed to the creators and maintainers of SAGE Math, and to Rafael Guglielmetti, the author of \texttt{CoxIter} and \texttt{AlVin}. The aforementioned software made most of the calculations in the present work possible and provided means for additional verification of those previously done.
We also thank the anonymous referees for their suggestions and criticism that helped us improving this paper.}}

\newpage

\appendix
\section{} \label{app}

\setlength{\abovedisplayskip}{0pt}
\setlength{\belowdisplayskip}{0pt}
\allowdisplaybreaks

\subsection{Outer normals for compact \texorpdfstring{$P_5$}{P5}} (\autoref{compact56})
\begin{flalign*}
&e_i = -v_i+v_{i+1} \text{ for } 1 \leq i\leq 4, &\\ 
&e_5 =-v_5, &\\ 
&e_6 =\omega v_0 + (2 + \omega) v_1, &\\
&e_7 =\omega (v_0 + v_1 + v_2 + v_3), &\\ 
&e_8 =(1 + \omega) (v_0 + v_1) +  \omega (v_2 + v_3 + v_4 + v_5). &
\end{flalign*}

\subsection{Outer normals for compact \texorpdfstring{$P_6$}{P6}} (\autoref{compact56})
\begin{flalign*}
&e_i = -v_i+v_{i+1} \text{ for } 1 \leq i\leq 5, &\\ 
&e_6 =-v_6, &\\ 
&e_7 = v_0 + w (v_1 + v_2), &\\
&e_8 = \omega (v_0 + v_1 + v_2 + v_3 + v_4), &\\
&e_9 = \omega v_0 + 2 \omega v_1, &\\
&e_{10} = (1 + \omega) (v_0 + v_1 + v_2) + \omega (v_3 + v_4 + v_5 + v_6), &\\
&e_{11} = (1 + 2 \omega) v_0 + (1 + 3 \omega) v_1 + (1 + \omega) (v_2 + v_3 + v_4) + \omega (v_5 + v_6), &\\
&e_{12} = (1 + 2 \omega) v_0 + (2 + 3 \omega) v_1 + \omega (v_2 + v_3 + v_4 + v_5 + v_6), &\\
&e_{13} = (2 + 2 \omega) v_0 + (1 + 2 \omega) (v_1 + v_2 + v_3 + v_4 + v_5) + v_6, &\\
&e_{14} = (2 + 3 \omega) v_0 + (2 + 4 \omega) v_1 + (2 + 2 \omega) v_2 + (1 + 2 \omega) (v_3 + v_4 + v_5) + v_6, &\\
&e_{15} = (2 + 3 \omega) v_0 + (3 + 4 \omega) v_1 + (1 + 2 \omega) (v_2 + v_3 + v_4) + 2 \omega v_5, &\\
&e_{16} = (2 + 4 \omega) v_0 + (3 + 6 \omega) v_1 + (1 + 2 \omega) (v_2 + v_3 + v_4 + v_5) + v_6, &\\
&e_{17} = (3 + 4 \omega) v_0 + (2 + 5 \omega) v_1 + (2 + 3 \omega) (v_2 + v_3 + v_4 + v_5) + \omega v_6, &\\
&e_{18} = (4 + 5 \omega) v_0 + (4 + 6 \omega) v_1 + (2 + 4 \omega) (v_2 + v_3 + v_4 + v_5). &
\end{flalign*}

\subsection{Outer normals for cusped \texorpdfstring{$P_4$}{P4}} (\autoref{cusped45678})
\begin{flalign*}
& e_i = -v_i \text{ for } 1 \leq i\leq 4, &\\
&e_i = v_0+v_{j_1}+v_{j_2} \text{ for } 5\leq i\leq 10 \text{ and } 1 \leq j_1 < j_2 \leq 4. &
\end{flalign*}

\subsection{Outer normals for cusped \texorpdfstring{$P_5$}{P5}} (\autoref{cusped45678})
\begin{flalign*}
& e_i = -v_i \text{ for } 1 \leq i\leq 5, &\\
&e_i = v_0+v_{j_1}+v_{j_2} \text{ for } 6\leq i\leq 15 \text{ and } 1 \leq j_1 < j_2 \leq 5, &\\
&e_{16} = 2v_0+v_1+v_2+v_3+v_4+v_5. &
\end{flalign*}

\subsection{Outer normals for cusped \texorpdfstring{$P_6$}{P6}} (\autoref{cusped45678})
\begin{flalign*}
& e_i = -v_i \text{ for } 1 \leq i\leq 6, &\\
&e_i = v_0+v_{j_1}+v_{j_2}  \text{ for } 7\leq i\leq 21 \text{ and } 1 \leq j_1 < j_2 \leq 6, \\
&e_i = 2v_0+v_{j_1}+\cdots+v_{j_5}  \text{ for } 22\leq i\leq 27 \text{ and } 1 \leq j_1 < j_2 < j_3 < j_4 < j_5 \leq 6. &
\end{flalign*}

\subsection{Outer normals for cusped \texorpdfstring{$P_7$}{P7}} (\autoref{cusped45678})
\begin{flalign*}
& e_i = -v_i \text{ for } 1 \leq i\leq 7, &\\
&e_i = v_0+v_{j_1}+v_{j_2} \text{ for } 8\leq i\leq 28 \text{ and }1 \leq j_1 < j_2 \leq 7, &\\
&e_i = 2v_0+v_{j_1}+\cdots+v_{j_5} \text{ for } 29\leq i\leq 49 \text{ and } 1 \leq j_1 < j_2 < j_3 < j_4 < j_5 \leq 7, &\\
&e_i = 3v_0+2v_{j_1}+v_{j_2}+\cdots+v_{j_7} \text{ for } 50\leq i\leq 56 \text{ and } 1 \leq j_1 < j_2 < j_3 < j_4 < j_5 < j_6 < j_7 \leq 7. &
\end{flalign*}

\subsection{Outer normals for cusped \texorpdfstring{$P_8$}{P8}} (\autoref{cusped45678})
\begin{flalign*}
&e_i = -v_i \text{ for } 1 \leq i\leq 8, &\\
&e_i = v_0+v_{j_1}+v_{j_2} \text{ for } 9\leq i\leq 36 \text{ and } 1 \leq j_1 < j_2 \leq 8, &\\
&e_i = 2v_0+v_{j_1}+\cdots+v_{j_5} \text{ for } 37\leq i\leq 92 \text{ and } 1 \leq j_1 < j_2 < j_3 < j_4 < j_5 \leq 8, &\\
&e_i = 3v_0+2v_{j_1}+v_{j_2}+\cdots+v_{j_7} \text{ for } 93\leq i\leq 148, &\\
&e_i = 4v_0+2(v_{j_1}+\cdots+v_{j_3})+v_{j_4}+\cdots+v_{j_7} \text{ for } 149\leq i\leq 204 &\\ & \hspace{28pt}\text{ and } 1 \leq j_1 < j_2 < j_3 < j_4 < j_5 < j_6 < j_7 \leq 7, &\\
&e_i = 5v_0+2(v_{j_1}+\cdots+v_{j_6})+v_{j_7}+v_{j_8} \text{ for } 205\leq i\leq 232, &\\
&e_i = 6v_0+3v_{j_1}+v_{j_2}+\cdots+v_{j_8} \text{ for } 233\leq i\leq 240 &\\ & \hspace{28pt}\text{ and } 1 \leq j_1 < j_2 < j_3 < j_4 < j_5 < j_6 < j_7 < j_8 \leq 8. &
\end{flalign*}

\subsection{Outer normals for cusped \texorpdfstring{$P_9$}{P9}} (\autoref{cusped9to13})
\begin{flalign*}
&e_i = -v_{i} + v_{i+1}, \text{ for } 1 \leq i\leq 8, &\\
&e_9 = -v_9, &\\
&e_{10} = v_0 + 2 v_1, &\\
&e_{11} = v_0 + v_1 + v_2 + v_3 + v_4, &\\
&e_{12} = 2 v_0 + v_1 + v_2 + v_3 + v_4 + v_5 + v_6 + v_7 + v_8 + v_9. &
\end{flalign*}

\subsection{Outer normals for cusped \texorpdfstring{$P_{10}$}{P10}} (\autoref{cusped9to13})
\begin{flalign*}
&e_i = -v_{i} + v_{i+1}, \text{ for } 1 \leq i\leq 9, &\\
&e_{10} = -v_{10}, &\\
&e_{11} = v_0 + v_1 + v_2 + v_3 + v_4, &\\ 
&e_{12} = v_0 + 2 v_1, &\\ 
&e_{13} = 2 v_0 + v_1 + v_2 +v_3 +v_4 + v_5 + v_6 + v_7 + v_8 + v_9 + v_{10}. &
\end{flalign*}

\subsection{Outer normals for cusped \texorpdfstring{$P_{11}$}{P11}} (\autoref{cusped9to13})
\begin{flalign*}
&e_i = -v_i + v_{i+1}, \text{ for } 1 \leq i\leq 7, \text{ and } i = 9, 10, &\\
&e_i = -v_i, \text{ and } i = 8, 11, &\\
&e_{12} = v_0 + v_9 + v_{10} + v_{11}, &\\
&e_{13} = v_0 + 2 v_1 + v_9, &\\
&e_{14} = v_0 + v_1 + v_2 + v_3 + v_4, &\\
&e_{15} = 2 v_0 + v_1 + v_2 + v_3 + v_4 + v_5 + v_6 + v_9 + v_{10}, &\\
&e_{16} = 2 v_0 + v_1 + v_2 + v_3 + v_4 + v_5 + v_6 + v_7 + v_8 + v_9, &\\
&e_{17} = 3 v_0 + 2 (v_1 + v_2) + v_3 + v_4 + v_5 + v_6 + v_7 + v_8 + v_9 + v_{10} + v_{11}, &\\
&e_{18} = 4 v_0 + 2 (v_1 + v_2 + v_3 + v_4 + v_5 + v_6 + v_7) + v_9 + v_{10} + v_{11}. &
\end{flalign*}

\subsection{Outer normals for cusped \texorpdfstring{$P_{12}$}{P12}} (\autoref{cusped9to13})
\begin{flalign*}
&e_i = -v_i + v_{i+1}, \text{ for } 1 \leq i\leq 10, &\\
&e_{i} = -v_{i}, \text{ for } i = 11, 12, &\\
&e_{13} = v_0 + 2 v_1 + v_{12}, &\\
&e_{14} = v_0 + v_1 + v_2 + v_3 + v_4, &\\
&e_{15} = 2 v_0 + v_1 + v_2 + v_3 + v_4 + v_5 + v_6 + v_7 + v_8 + v_{12}, &\\
&e_{16} = 2 v_0 + v_1 + v_2 + v_3 + v_4 + v_5 + v_6 + v_7 + v_8 + v_9 + v_{10}, &\\
&e_{17} = 3 v_0 + v_1 + v_2 + v_3 + v_4 + v_5 + v_6 + v_7 + v_8 + v_9 + v_{10} + v_{11} + 2 v_{12}, &\\
&e_{18} = 3 v_0 + 2 (v_1 + v_2) + v_3 + v_4 + v_5 + v_6 + v_7 + v_8 + v_9 + v_{10} + v_{11} + v_{12}, &\\
&e_{19} = 3 (v_0 + v_1) + v_2 + v_3 + v_4 + v_5 + v_6 + v_7 + v_8 + v_9 + v_{10} + v_{11}, &\\
&e_{20} = 5 v_0 + 2 (v_1 + v_2 + v_3 + v_4 + v_5 + v_6 + v_7 + v_8 + v_9 + v_{10} + v_{11} + v_{12}). &
\end{flalign*}

\subsection{Outer normals for cusped \texorpdfstring{$P_{13}$}{P13}} (\autoref{cusped9to13})
\begin{flalign*}
&e_i = -v_{i} + v_{i+1}, \text{ for } 1 \leq i\leq 12, &\\
&e_{13} = -v_{13}, &\\
&e_{14} = v_0 + v_1 + v_2 + v_3 + v_4, &\\ 
&e_{15} = v_0 + 2 v_1, &\\ 
&e_{16} = 2 v_0 + v_1 + v_2 +v _3 + v_4 + v_5 + v_6 + v_7 + v_8 + v_9 + v_{10}, &\\
&e_{17} = 3 v_0 + 3 v_1 + v_2 +v _3 + v_4 + v_5 + v_6 + v_7 + v_8 + v_9 + v_{10} + v_{11} + v_{12}, &\\
&e_{18} = 3 v_0 + 2 v_1 + 2 v_2 + v _3 + v_4 + v_5 + v_6 + v_7 + v_8 + v_9 + v_{10} + v_{11} + v_{12} + v_{13}, &\\
&e_{19} = 5 v_0 + 2 (v_1 + v_2 + v _3 + v_4 + v_5 + v_6 + v_7 + v_8 + v_9 + v_{10} + v_{11} + v_{12} + v_{13}). &
\end{flalign*}

\end{document}